\newtheorem{theorem}[subsection]{Theorem}
\newtheorem{lemma}[subsection]{Lemma}
\newtheorem{proposition}[subsection]{Proposition}
\newtheorem{corollary}[subsection]{Corollary}
\theoremstyle{definition}
\newtheorem{definition}[subsection]{Definition}
\newtheorem{remark}[subsection]{Remark}
\newtheorem{example}[subsection]{Example}
\renewcommand{\tilde}{\widetilde} 
\newcommand{\End}{\textnormal{End}}
\newcommand{\C}{\mathbf{C}}
\newcommand{\R}{\mathbf{R}}
\newcommand{\Q}{\mathbf{Q}}
\newcommand{\Z}{\mathbf{Z}}
\renewcommand{\phi}{\varphi}    
\renewcommand{\epsilon}{\varepsilon}    
\renewcommand{\leq}{\leqslant }
\newcommand{\bea}          {\begin{eqnarray}}
\newcommand{\eea}          {\end{eqnarray}}
\newcommand{\beastar}          {\begin{eqnarray*}}
\newcommand{\eeastar}          {\end{eqnarray*}}
\newcommand{\alg}{\textnormal{alg}}
\newcommand{\ch}{\textnormal{Ch}}
\newcommand{\Pic}{\textnormal{Pic}}
\DeclareFontFamily{OT1}{pzc}{}
\DeclareFontShape{OT1}{pzc}{m}{it}{<-> s * [1.20] pzcmi7t}{}
\DeclareMathAlphabet{\mathpzc}{OT1}{pzc}{m}{it}
\begin{document}
 
\date{\today}
\title{Hecke modules for arithmetic groups via bivariant $K$-theory}

\author[B. Mesland]{Bram Mesland}
\address{\normalfont{University of Bonn, Endenicher Allee 60, 53115, Germany}}
\email{mesland@math.uni-bonn.de}

\author[M.H. \c{S}eng\"un]{Mehmet Haluk \c{S}eng\"un}
\address{\normalfont{University of Sheffield,
School of Mathematics and Statistics, Hicks Building,
Sheffield S3 7RH }}
\email{m.sengun@sheffield.ac.uk}
\subjclass[2010]{11F75, 11F32, 19K35, 55N20}

\begin{abstract} 
\noindent Let $\Gamma$ be a lattice in a locally compact group $G$. In another work, we used $KK$-theory to equip with Hecke operators the $K$-groups of any $\Gamma$-$C^{*}$-algebra on which the commensurator of $\Gamma$ acts. When $\Gamma$ is arithmetic, this gives Hecke operators on the $K$-theory of certain $C^{*}$-algebras that are naturally associated with $\Gamma$. In this paper, we first study the topological $K$-theory of the arithmetic manifold associated to $\Gamma$. We prove that the Chern character commutes with Hecke operators. Afterwards, we show that the Shimura product of double cosets naturally corresponds to the Kasparov product and thus that the $KK$-groups associated to an arithmetic group $\Gamma$ become true Hecke modules. We conclude by discussing Hecke equivariant maps in $KK$-theory in great generality and apply this to the Borel-Serre compactification as well as various noncommutative compactifications associated with $\Gamma$. Along the way we discuss the relation between the $K$-theory and the integral cohomology of low-dimensional manifolds as Hecke modules.
\end{abstract}

\maketitle

\tableofcontents 
\section{Introduction}
Let $\Gamma$ be a lattice in a locally compact group $G$ with commensurator $C_G(\Gamma)$. Let $S \subset C_G(\Gamma)$ be a group containing $\Gamma$. In \cite{mesland-sengun}, for $g \in S$ and $B$ a $S$-$C^*$-algebra (that is, a $C^*$-algebra on which $S$ acts via automorphisms), we constructed elements $[T_g] \in KK_0(B \rtimes_r \Gamma, B \rtimes_r \Gamma)$. We introduced {\em analytic Hecke operators} on any  module over $KK_0(B \rtimes_{r} \Gamma, B \rtimes_{r} \Gamma)$ as the endomorphisms arising from the classes $[T_g]$. In the present paper we prove several structural results about these Hecke operators, showing that they generalise the well-known cohomological Hecke operators in a way that is compatible with the Chern character and the double-coset Hecke ring of Shimura. 

The double-coset Hecke ring of Shimura is well-known to number theorists. In the widely studied case where $\Gamma$ is an arithmetic group, the Hecke ring acts linearly on various spaces of automorphic forms associated to $\Gamma$, providing a rich supply of symmetries (\cite[Chapter 3]{Shimura}). Those automorphic forms that are simultaneous eigenvectors of these symmetries are conjectured, and proven in many cases, to have deep connections to arithmetic (\cite{clozel, taylor}). The Hecke ring also acts on the cohomology of the arithmetic manifold $M$ associated to $\Gamma$ and there is a Hecke equivariant isomorphism between spaces of automorphic form associated to $\Gamma$ and cohomology of $M$ twisted with suitable local systems (\cite{franke, Shimura}). The passage to cohomology leads to many fundamental results and new insights on the arithmetic of automorphic forms. The results of this paper, together with those of \cite{mesland-sengun}, offer an analytic habitat for the Hecke ring by providing ring homomorphisms from the Hecke ring to suitable $KK$-groups. The passage to $KK$-theory extends the scope of the action of the Hecke ring beyond cohomology and allows for the possibility of using of tools from operator $K$-theory in the study of automorphic forms. 

Let  us describe the results of the paper more precisely. In Section \ref{chern}, we consider the situation where $S$ acts on a locally compact Hausdorff space $X$. Assume that $\Gamma$ acts freely and properly on $X$ and put $M=\Gamma \backslash X$. It is well-known that the $C^*$-algebras $C_0(X) \rtimes_{r} \Gamma$ and $C_0(M)$ are Morita equivalent, so  $$KK_0(C_0(X) \rtimes_{r} \Gamma, C_0(X) \rtimes_{r} \Gamma) \simeq KK_{0}(C_{0}(M),C_{0}(M)),$$ and thus for any $g \in S$ we obtain a class $[T_g] \in KK_{0}(C_{0}(M),C_{0}(M))$. The element $g$ gives rise to a cover $M_{g}$ of $M$ and a pair of covering maps, forming the \emph{Hecke correspondence} $M\xleftarrow{s} M_{g} \xrightarrow{t} M$. In \cite{mesland-sengun} it was shown that the class $[T_g]$
 corresponds to the class of this Hecke correspondence, that is, 
 $$[T_g] = [M\leftarrow M_{g}\rightarrow M] \in KK_{0}(C_{0}(M),C_{0}(M)).$$

This class induces a Hecke operator $T_{g}:K^{*}(M)\to K^{*}(M)$ on topological $K$-theory. In this paper we show that the Chern character
\[\textnormal{Ch} : K^{0}(M)\oplus K^{1}(M)\to H^{\textnormal{ev}}(M,\Q)\oplus H^{\textnormal{odd}}(M,\Q),\]
is \emph{Hecke equivariant}. Here we equip $H^{*}(M,\Q)$ with Hecke operators in the usual way using the Hecke correspondence $M\xleftarrow{s} M_{g} \xrightarrow{t} M$ (see, for example, \cite{lee}). 

In Section \ref{bianchi}, we specialize to non-compact arithmetic hyperbolic 3-manifolds $M$. Let $\overline{M}$ be the Borel-Serre compactification of $M$. Consider the diagram 
\begin{equation} \label{pairing}
\xymatrix{ K^0(M) \ar[d] & \times & K_0(M)  \ar[r] & \Z \\ 
                     H^2(\overline{M}, \partial \overline{M}, \Z) & \times & H_2(\overline{M}, \partial \overline{M}, \Z) \ar[u] \ar[r]& \Z 
}
\end{equation} 
Here horizontal arrows are given by the standard pairings with respect to which the Hecke operators are adjoint. The vertical arrows are Hecke equivariant isomorphisms; we establish the one on the left via the results of  Section \ref{chern}  and the one on the right was proven in \cite{mesland-sengun}. Using the relative index theorem, we show that the diagram commutes. Using very different techniques, we proved a similar result in \cite{mesland-sengun} where the 
$K$-groups of $M$ were replaced with those of the reduced group $C^*$-algebra $C_r^*(\Gamma)$ of $\Gamma$.

In Section \ref{shimura} we prove the main result of the paper. The double-coset Hecke ring $\Z[\Gamma, S]$ is the free abelian group on the double cosets $\Gamma g\Gamma$, with $g\in S$, equipped with the  Shimura product (\cite{Shimura}). We show that the map $\Gamma g^{-1} \Gamma \mapsto [T_{g}]$ extends to a ring homomorphism
$$\Z[\Gamma, S]\rightarrow KK_0(B \rtimes_{r} \Gamma, B \rtimes_{r} \Gamma),$$
for any $S$-$C^{*}$-algebra $B$.
As mentioned in the second paragraph of this introduction, this homomorphism provides the Hecke ring $\Z[\Gamma, S]$ with a new habitat. The universality property of $KK$-theory \cite{higson} implies that for any additive functor $F$ on separable $C^*$-algebras that is homotopy invariant, split-exact and stable, the abelian groups $F(B\rtimes_{r}\Gamma)$ are modules over $\Z[\Gamma, S]$. For example, let $\Gamma$ be an arithmetic group in a semi-simple real Lie group $G$. By taking $F$ to be local cyclic cohomology and $B=C_0(X)$ where $X$ is the symmetric space of $G$, we recover the action of the Hecke ring on the cohomology of the arithmetic manifold $X / \Gamma$. In \cite{mesland-sengun}, we took $F$ to be $K$-homology and worked with three different $S$-$C^*$-algebras $B$ that were naturally associated to $\Gamma$.   

In Section \ref{exact}, we show that a $\Gamma$-exact and $S$-equivariant extension,
\[0\to B\to E\to A\to 0,\]
of $C^{*}$-algebras induces Hecke equivariant long exact sequences relating the $KK$-groups of the crossed products
$B\rtimes_{r}\Gamma, E\rtimes_{r}\Gamma$ and $A\rtimes_{r}\Gamma$. In particular, suppose that $X$ is a free and proper $\Gamma$-space on which $S$ acts by homeomorphisms, $\overline{X}$ a partial $S$-compactification of $X$ with boundary $\partial X:=\overline{X}\setminus X$. Then the extension 
$$0\to C_{0}(X)\to C_0(\overline{X})\to C_0(\partial X)\to 0,$$
induces a Hecke equivariant exact sequence
$$\xymatrix{K_1(C_{0}(X)\rtimes_{r}\Gamma) \ar[r] & K_1(C_{0}(\overline{X})\rtimes_{r}\Gamma) \ar[r] & K_1(C_{0}(\partial X)\rtimes_{r}\Gamma) \ar[d] \\ 
K_0(C_{0}(\partial X)\rtimes_{r}\Gamma ) \ar[u] & K_0(C_{0}(\overline{X})\rtimes_{r}\Gamma) \ar[l] & K_0(C_{0}(X)\rtimes_{r}\Gamma), \ar[l]}$$
of $\mathbf{Z}[\Gamma,S]$-modules. The results of Sections \ref{shimura} and \ref{exact} hold for the full crossed product algebras as well.  

Let ${\bf G}$ be a reductive algebraic group and $\Gamma\subset {\bf G}(\mathbf{Q})$ an arithmetic group. Then the Borel-Serre partial compactification $\overline{X}$  of the associated global symmetric space $X$ is a proper ${\bf G}(\Q)$-compactification. The associated Morita equivalences provide a Hecke equivariant isomorphism of above six-term exact sequence with the topological $K$-theory exact sequence of the Borel-Serre compactification of the arithmetic manifold $X/ \Gamma$ and its boundary.

The generality of our methods allow to also consider various noncommutative compactifications. One family of examples are  the Hecke equivariant Gysin exact sequences studied in \cite{mesland-sengun} coming from the geodesic compactification of hyperbolic $n$-space. Other examples of interest come from the Floyd boundary of $\Gamma$, such as the the boundary of tree associated to $SL(2,\mathbf{Z})$ and the Bruhat-Tits building of a $p$-adic group and its boundary. In most of these cases not all of the crossed products are Morita equivalent to a commutative $C^{*}$-algebra. 

\subsection*{Acknowledgements} We gratefully acknowledge our debt to Heath Emerson for suggesting Theorem \ref{KKequiv},  to John Greenlees and Dimitar Kodjabachev for their help with stable homotopy theory, to Matthias Lesch for a discussion on relative index theory and to Paul Mitchener for a discussion on the universal property of $KK$-theory. Finally we thank the anonymous referee for helpful suggestions.

\subsection*{Set-up and notation} \label{set-up}

The following set-up will hold for the whole paper. Let $G$ be a locally compact group and  $\Gamma \subset G$ a torsion-free discrete subgroup. Recall that two subgroups $H,K$ of $G$ are called {\em commensurable} if $H \cap K$ is of finite index in both $H$ and $K$. The commensurator $C_G(\Gamma)$ of $\Gamma$ (in $G$) is the group of elements $g \in G$ for which $\Gamma$ and $g\Gamma g^{-1}$ are commensurable. Moreover $S$ will denote a subgroup of $C_G(\Gamma)$ containing $\Gamma$.

\section{Hecke equivariance of the Chern character} \label{chern}

In this section, we shall assume that $S$ acts on a locally compact Hausdorff space $X$ and that the action of $\Gamma$ on $X$ is free and proper. Let $M$ denote the Hausdorff space $X / \Gamma$. Given an element $g \in S$, we put $M_g := X / \Gamma_g$ and 
$M^g := X / \Gamma^g$ where $\Gamma^g:=\Gamma \cap g^{-1}\Gamma g$ and $\Gamma_g:=\Gamma \cap g\Gamma g^{-1} = g\Gamma^g g^{-1}$. Note that $s : M_g \rightarrow M$ and $s': M^g \rightarrow M$ are finite sheeted covers (of the same degree) and the map $c: M_g \to M^g$ defined by $x\Gamma_g \mapsto g^{-1} x \Gamma^g$ is a homeomorphism. We obtain a second finite covering $t:= s' \circ c : M_g \to M$.

We shall equip the topological $K$-theory of $M$ with Hecke operators via two different constructions, one analytical, arising from a $KK$-class and the other topological, arising from a correspondence. We will see that these two constructions give rise to the same Hecke operator. Afterwards, we will show that the Chern character between the $K$-theory and the ordinary cohomology of $M$ is Hecke equivariant.

\subsection{Analytic Hecke operators.} \label{analytic-hecke} Let $g \in S$. As mentioned in the Introduction, thanks to a Morita equivalence, the analytically constructed class $[T_g] \in KK_0(C_0(X) \rtimes \Gamma, C_0(X) \rtimes \Gamma)$ gives rise to a class $[T_g^M] \in KK_0(C_0(M), C_0(M))$. This latter class has a simpler description which we now recall.  

The conditional expectation  
\[\rho:C_{0}(M_{g})\to C_{0}(M),\quad \rho(\psi)(m)=\sum_{x\in t^{-1}(m)} \psi(x), \] 
and right module structure
\[ \psi\cdot f(x):=\psi(x)f(t(x))\]
give $C_{0}(M_{g})$ a right $C_{0}(M)$-module which we will denote by $T_{g}^{M}$.  Because the map $s: M_{g}\to M$ is proper, there is a left action of $C_0(M)$ on $T_g^M$ by compact operators
\[C_{0}(M)\to \mathbb{K}(T_{g}^{M}),\quad f\cdot\psi(x)=f(s(x))\psi(x).\]
Then $[T_{g}^{M}]\in KK_{0}(C_{0}(M),C_{0}(M))$ is the class of this bimodule. 

Observe that $M\xleftarrow{s} M_{g}\xrightarrow{t}M$ defines a {\bf correspondence} in the sense of \cite{connes-skandalis}. Associated to this correspondence, there exists a class 
$[s_*] \otimes [t!] \in KK_0(C_0(M), C_0(M))$ where $t!$ is the wrong-way cycle arising from $t$. As $t$ is simply a finite covering of manifolds, it follows from \cite[Prop. 2.9]{connes-skandalis} 
that $t!$ acquires a simpler description and it is then not hard to see that $[s_*] \otimes [t!]$ equals $[T_{g}^{M}]$ above. 

\begin{definition}\label{manifoldHecke} Let $M=X/\Gamma$ as above. For any separable $C^{*}$-algebra $C$, the \emph{analytic Hecke operators}
\[T_{g}:KK_{*}(C_{0}(M),C)\to KK_{*}(C_{0}(M),C), \] 
\[T_{g}:KK_{*}(C, C_{0}(M))\to KK_{*}(C,C_{0}(M)) ,\]
are defined to be the Kasparov product with the class $[T_{g}^{M}]\in KK_{0}(C_{0}(M),C_{0}(M))$.
\end{definition}

An important case is when one takes $C\simeq \C$. Then we obtain analytic Hecke operators on the topological $K$-theory of $M$:
$$T_g : K^*(M) \rightarrow K^*(M).$$

\subsection{Topological Hecke operators.} We now proceed to give an ``elementary'' description of our Hecke operators in the special case of topological $K$-theory. To do this, we will follow the description of Hecke operators on ordinary cohomology from correspondences (see, for example, \cite{mesland-sengun}). To this end, we shall introduce the ``transfer map'' machinery from stable homotopy theory which will allow us to deal with generalized cohomology theories at no extra cost.

To a finite covering map $p: (Y,B) \rightarrow (X,A)$ of pairs of spaces (that is, a finite covering $p:Y \to X$ with subspaces $A\subset X$ and $B \subset Y$ such that $B=p^{-1}(A)$), there is a well-known construction (see \cite[Construction 4.1.1, Thm. 4.2.3]{adams}) and \cite{kahn-priddy}) that associates to $p$ a map of suspension spectra $p^! : \Sigma^{\infty} (X/A) \rightarrow \Sigma^{\infty} (Y/B)$. Via pre-composition with $p^!$, for any generalized cohomology theory $h^*$ with spectrum $E$, we obtain a homomorphism called the {\bf transfer map}
$$p^! : h^n(Y,B)=[\Sigma^\infty S^n \wedge \Sigma^\infty (Y/B), E] \longrightarrow h^n(X,A)=[\Sigma^\infty S^n \wedge \Sigma^\infty (X/A), E].$$ 

This transfer map agrees with the usual one in the case of ordinary cohomology (see \cite[Props. 2.1]{kahn-priddy}). In the case of topological $K$-theory, 
the transfer map is induced by the direct image map of Atiyah \cite{atiyah} (see \cite[Props. 2.3]{kahn-priddy}). Recall that if $f: Y \rightarrow X$ is a finite covering map and $E \rightarrow X$ is a vector bundle, then the direct image bundle  
$f^! E \rightarrow Y$ has fiber $(f^! E)_y$ at $y \in Y$ given by the direct sum ${\displaystyle \bigoplus_{f(x)=y}} E_x$. 

\begin{definition} Given any generalized cohomology theory $h^*$ with spectrum $E$ and $g \in S$, the {\em topological Hecke operator} $T_g$ on $h^n(M)$ is defined as the composition
$$ h^n(M) \xrightarrow{s^*} h^n(M_g) \xrightarrow{t^!} h^n(M).$$
\end{definition}

In the case of topological $K$-theory, these topological Hecke operator agree with the analytic ones that we defined earlier.

\begin{proposition}Let $g \in S$. The analytic Hecke operator $T_g$ on $K^*(M)$ agrees with the topological Hecke operator $T_g$ on $K^*(M)$.
\end{proposition}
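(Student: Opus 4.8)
The plan is to match the two constructions by unwinding both sides into the same concrete $KK$-cycle, using the special features of finite coverings of manifolds. On the analytic side, by Definition~\ref{manifoldHecke} the operator $T_g$ on $K^*(M)=KK_*(C_0(M),\C)$ is the Kasparov product with $[T_g^M]=[s_*]\otimes[t!]$, where we have already recalled that $[T_g^M]$ is the class of the $C_0(M)$-bimodule built from $C_0(M_g)$ via the conditional expectation $\rho$, the right module structure through $t$, and the left action of $C_0(M)$ by compact operators through $s$. On the topological side, $T_g$ is the composite $s^*$ followed by the transfer $t^!$, and by the cited results of Kahn--Priddy and Atiyah the transfer $t^!$ for the finite covering $t:M_g\to M$ is, on $K$-theory, precisely the direct image map $E\mapsto t^!E$ on vector bundles. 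So the core task is to identify the Kasparov product with $[s_*]\otimes[t!]$ with the composite of pullback along $s$ and Atiyah's direct image along $t$.

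I would carry this out in three steps. First, I would recall the well-known identification of the $K$-theoretic wrong-way map $t!\in KK_0(C_0(M_g),C_0(M))$ for a finite covering $t$ of manifolds: by \cite[Prop.~2.9]{connes-skandalis}, for such a covering the wrong-way element is represented by $C_0(M_g)$ viewed as a right $C_0(M)$-module via $\rho$ and $t$, i.e.\ the Kasparov product with $[t!]$ on a class $[V]\in K^0(M_g)$ (a vector bundle $V$ on $M_g$) yields exactly the direct image bundle $t^!V$ on $M$. This is the module-theoretic incarnation of the statement that the transfer on $K$-theory is Atiyah's direct image. Second, I would note that $[s_*]\in KK_0(C_0(M),C_0(M_g))$ is the class of the $*$-homomorphism $s^*:C_0(M)\to C_0(M_g)$, so the Kasparov product with $[s_*]$ on a class in $K^0(M)$ is just pullback of vector bundles along $s$. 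Third, I would combine these: on a class $[V]\in K^*(M)$,
\[
T_g^{\mathrm{an}}[V] \;=\; [V]\otimes_{C_0(M)}[s_*]\otimes_{C_0(M_g)}[t!] \;=\; \bigl(t!\bigr)_*\bigl(s^*[V]\bigr) \;=\; t^!\bigl(s^*[V]\bigr) \;=\; T_g^{\mathrm{top}}[V],
\]
using associativity of the Kasparov product, the identification of $[s_*]$-product with $s^*$, and the identification of $[t!]$-product with the transfer $t^!$. The same computation applies verbatim in $K^1$ by a suspension argument, or by working with $KK_*(C_0(M),\C)$ throughout; and both descriptions of $[T_g^M]$ have already been shown in the excerpt to agree, so there is no ambiguity in the starting point.

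The main obstacle is the second ingredient: pinning down, with a precise reference and in the correct normalization, that the Kasparov product with the wrong-way class $[t!]$ for a finite covering coincides with the topologists' transfer $t^!$ and hence with Atiyah's direct image. The subtlety is not conceptual but bookkeeping — one must check that the module $C_0(M_g)$ with $\rho$ as inner product really does implement $V\mapsto t^!V$ on the level of finitely generated projective modules (a local computation over the covering, where $C_0(M_g)$ restricted to a trivializing open set is a finite direct sum of copies of $C_0$ of that set), and that the grading/suspension conventions match so that no sign or Bott-periodicity shift is introduced. Once this compatibility of the wrong-way map with the transfer is in hand, the rest is a formal manipulation of Kasparov products, and the proposition follows. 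I would isolate this compatibility as the one nontrivial lemma and prove it by reducing, via a partition of unity on $M$ subordinate to an open cover over which $t$ is trivial, to the trivial covering $\coprod_{i=1}^d U \to U$, where both sides are manifestly $V\mapsto \bigoplus_{i=1}^d V$.
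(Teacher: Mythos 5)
Your proposal is correct and follows essentially the same route as the paper: the paper's proof also reduces everything to showing that the Kasparov product with the bimodule $C_0(M_g)$ (equipped with the conditional expectation) implements Atiyah's direct image, and it establishes this via exactly the unitary isomorphism $\Gamma(E)\otimes_{C_0(M_g)}C_0(M_g)_{C_0(M)}\simeq \Gamma(t^!E)$ built from a partition of unity subordinate to a trivializing cover, which is the lemma you isolate. The $K^1$ case is likewise handled in the paper by the suspension argument you sketch, using naturality of the transfer and its compatibility with connecting maps.
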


\begin{proof} Let us prove the statement for $K^0$ first. It suffices to show that, after we identify $K^0(M) \simeq K_0(C_0(M))$, the direct image map of Atiyah is induced by tensor product (from the right) with the $C_0(M)$-module $T_g^M$ defined above in Section \ref{analytic-hecke}. To that end, we need to show that for any vector bundle $E \to M_g$, there is a unitary isomorphism between the $C_0(M)$-modules of sections
\[\alpha: \Gamma(E)\otimes_{C_0(M_g)}C_0(M_g)_{C_0(M)}\xrightarrow{\sim} \Gamma(t^! E).\]
This is achieved by choosing an open cover $U_i$ of $M_g$ for which the covering map $t$ is homeomorphic. Let $\chi_i^{2}$ be a partition of unity subordinate to the $U_i$. Define
\[\alpha(\psi\otimes f)(m):=\left(\sum_i \chi_i(x)\psi(x)f(x)\right)_{x\in t^{-1}(m)}\in t^!E.\]
It is straightforward to check that this induces the desired unitary isomorphism. Note that the above is also observed in \cite[Lemma 3.12]{ramras}. 

To prove the claim for $K^1$, we will descend to $K^0$ and exploit, as we did above, the fact that transfer is implemented by the direct image map. Consider the diagram below.
\begin{small}
\begin{equation} \label{main}
\xymatrix{ K^1(M_g) \ar[d]^{t^!} \ar[r]^{\simeq \ \ \ \ \ } & K^0(M_g \times \R) \ar[d]^{(t \times {\rm Id})^!}  \\
                     K^1(M) \ar[r]^{\simeq \ \ \ \ \ }               & K^0(M \times \R)  }
\end{equation}
\end{small}
 
The vertical arrows are the transfer maps arising from the finite coverings $t: M_g \to M$ and $t \times {\rm Id} : M_g \times \R \to M \times \R$.  The horizontal isomorphisms follow from long exact sequences in topological $K$-theory associated to suitable pairs of spaces.  As the transfer map is natural and commutes with connecting morphisms (see \cite[p.123-124]{adams}), it follows that the diagram is commutative.

Note that $K^0(M\times \R) \simeq K_0(C_0(M) \otimes C_0(\R))$. Under the isomorphism 
$$KK_0(C_0(M), C_0(M)) \xrightarrow{\simeq} KK_{0}(C_0(M)\otimes C_0(\R),C_0(M)\otimes C_0(\R)),$$ 
our distinguished class $[T_g^M]$ gets sent to $[T_g^M \otimes C_0(\R)]$. Now the same argument as in the first paragraph of this proof shows that the direct image map of Atiyah, for the finite covering $M_g \times \R \xrightarrow{t \times {\rm Id}} M \times \R$, is induced by tensor product with the $C_0(M)\otimes C_0(\R)$-module $T_g^M \otimes C_0(\R)$.
\end{proof}

\subsection{} Given a pair of compact Hausdorff spaces $(X,A)$, we have the Chern character (see \cite[V.3.26]{karoubi})
$$\ch: K^i(X,A)  \longrightarrow PH^{i}(X,A,\Q), \quad i=0,1$$
where $PH^0$ (resp. $PH^1$) is the periodic cohomology group given by the direct sum of the even (resp. odd) degree ordinary cohomology groups.
The Chern character commutes with suspension and thus is a stable cohomology operation (of degree $0$). 

Now let $M$ be a non-compact arithmetic manifold. For $g \in C_G(M)$, let $\overline{M},\overline{M_g}$ denote the Borel-Serre compactification of $M,M_g$ respectively (see \cite{borel-serre} and also \cite[2.1.2]{mesland-sengun}). It is well-known that the finite covering maps $s,t : M_g \rightarrow M$ extend to finite coverings of pairs of spaces  $\bar{s},\bar{t} : (\overline{M_g}, \partial \overline{M_g}) \rightarrow (\overline{M}, \partial \overline{M})$. From these, we obtain Hecke operators $T_g$ on the relative groups $K^*(\overline{M}, \partial \overline{M})$ and $H^*(\overline{M}, \partial \overline{M},\Z)$. Notice that $K^*(\overline{M}, \partial \overline{M}) \simeq \tilde{K}^*(M^+) = K^*(M) \simeq K_*(C_0(M))$ where $M^+$ is the one-point compactification of $M$. Moreover, we have that 
$H^*(\overline{M}, \partial \overline{M},\Z) \simeq H^*_c(M,\Z)$ where $H^*_c$ denotes compactly supported cohomology. 

It follows that for a given arithmetic manifold $M$, by choosing $(X,A)=(M,\emptyset)$ if $M$ is compact and $(X,A)=(\overline{M}, \partial \overline{M})$ if $M$ is non-compact, we have the Chern character
$$\ch: K^i(M) \longrightarrow PH_c^i(M,\Q), \quad i=0,1$$
and both sides are Hecke modules. A most natural question is whether the Chern character commutes with the Hecke actions.
\begin{proposition} \label{chern-equiv} Let $M$ be an arithmetic manifold and $g \in C_G(M)$. The Chern character 
$$\ch: K^i(M)  \longrightarrow PH^{i}_c(M,\Q), \quad i=0,1$$
commutes with the action of the Hecke operator $T_g$ on both sides.
\end{proposition}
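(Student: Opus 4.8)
The strategy is to observe that, in both the topological $K$-theory of $M$ and the periodic rational cohomology of $M$, the Hecke operator $T_{g}$ is the \emph{same} construction $t^{!}\circ s^{*}$ applied to two different cohomology theories, and then to exploit that the Chern character is a stable cohomology operation relating them. By the preceding Proposition the analytic Hecke operator on $K^{*}(M)$ coincides with the topological one, so it is enough to argue for the topological operators. Fix $(X,A)=(M,\emptyset)$ if $M$ is compact and $(X,A)=(\overline{M},\partial\overline{M})$ otherwise, and let $\bar s,\bar t\colon(\overline{M_{g}},\partial\overline{M_{g}})\to(\overline{M},\partial\overline{M})$ be the finite coverings of pairs extending $s,t$. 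With these conventions, for any generalized cohomology theory $h^{*}$ the operator $h^{*}(M)\to h^{*}(M)$ under consideration is $\bar t^{!}\circ\bar s^{*}$; for $h^{*}=K^{*}$ this is the topological Hecke operator, and for $h^{*}=PH^{*}(-,\Q)$ it is, by \cite[Prop.~2.1]{kahn-priddy}, the classical cohomological Hecke operator attached to the correspondence $M\xleftarrow{s}M_{g}\xrightarrow{t}M$. Thus it suffices to show that $\ch$ commutes separately with $\bar s^{*}$ and with $\bar t^{!}$.

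Commutation with $\bar s^{*}$ is immediate from naturality of the Chern character for maps of pairs, so we turn to the transfer, where the argument is formal once one recalls how $\bar t^{!}$ is built. The finite covering of pairs $\bar t$ determines a single map of suspension spectra
\[
\bar t^{!}\colon \Sigma^{\infty}(\overline{M}/\partial\overline{M})\longrightarrow \Sigma^{\infty}(\overline{M_{g}}/\partial\overline{M_{g}})
\]
(\cite[Construction~4.1.1, Thm.~4.2.3]{adams}), and, for any spectrum $E$, the transfer on $E$-cohomology is precomposition with a suspension of this one fixed map; in particular it does not depend on $E$. Now the Chern character, being a stable cohomology operation of degree $0$, is represented by a map of spectra $u\colon KU\to P$, where $P$ is the $2$-periodic spectrum representing periodic rational cohomology (indeed $u$ realises the splitting $KU_{\Q}\simeq P=\bigvee_{n\in\Z}\Sigma^{2n}H\Q$ of rational complex $K$-theory). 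Representing a class in $K^{i}(\overline{M_{g}},\partial\overline{M_{g}})$ by a morphism $\phi\colon\Sigma^{\infty}S^{i}\wedge\Sigma^{\infty}(\overline{M_{g}}/\partial\overline{M_{g}})\to KU$, the transfer sends $\phi\mapsto\phi\circ(\mathrm{id}_{\Sigma^{\infty}S^{i}}\wedge\bar t^{!})$ while $\ch$ sends $\phi\mapsto u\circ\phi$, so associativity of composition gives
\[
\ch\bigl(\bar t^{!}\phi\bigr)=u\circ\phi\circ(\mathrm{id}\wedge\bar t^{!})=\bar t^{!}\bigl(\ch\phi\bigr).
\]
Since $\bar t^{!}$ in $K$-theory is implemented by Atiyah's direct image along $\bar t$ by \cite[Prop.~2.3]{kahn-priddy}, this identity says exactly that the Chern character intertwines the two concrete transfers appearing in the Hecke operators. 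Combined with commutation with $\bar s^{*}$, it yields $\ch\circ T_{g}=T_{g}\circ\ch$ on $K^{i}(M)$, and a final appeal to the preceding Proposition transports the identity to the analytic Hecke operators.

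I expect the only genuine work to be the bookkeeping needed to handle $i=1$ alongside $i=0$: exactly as in the proof of the preceding Proposition, one trades $\bar t^{!}$ on $K^{1}$ for $(\bar t\times\mathrm{id})^{!}$ on $K^{0}(-\times\R)$ via the suspension isomorphism, using that transfers commute with connecting maps (\cite[p.~123--124]{adams}) and that the Chern character of \cite[V.3.26]{karoubi} commutes with suspension, and likewise one checks that this Chern character for pairs is the stable operation $u$ used above. None of this presents a real obstacle; the substance is the universality of the transfer in the cohomology theory, which makes it automatically compatible with any stable operation, the Chern character in particular.
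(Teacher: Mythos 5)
Your proposal is correct and follows essentially the same route as the paper: both reduce to the topological description $T_{g}=t^{!}\circ s^{*}$, use naturality of $\ch$ for $s^{*}$, and observe that the transfer, being precomposition with a fixed map of suspension spectra, automatically commutes with any stable degree-$0$ operation realised as postcomposition with a spectrum map. The only (harmless) extra material in your write-up is the explicit splitting $KU_{\Q}\simeq\bigvee_{n}\Sigma^{2n}H\Q$ and the suspension bookkeeping for $i=1$, which the spectrum-level argument already handles uniformly in all degrees.
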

\begin{proof} 
Consider a cohomology operation $\Psi : E^*(\cdot) \rightarrow F^{*}(\cdot)$ of degree $0$ between two cohomology theories with spectra $E,F$. If $\Psi$ is stable, there is in fact a map of spectra $\Psi : E \rightarrow F$ and the the cohomology operation is simply the composition 
$$E^n(X,A)=[\Sigma^\infty S^n \wedge \Sigma^\infty (A/X), E] \longrightarrow F^{n}(X,A)=[\Sigma^\infty S^n \wedge \Sigma^\infty (X/A), F], \ \ f \mapsto \Psi \circ f.$$ 
It immediately follows that the transfer operator associated to a finite cover of pairs of spaces $p:(Y,B) \to (X,A)$ commutes with $\Psi$, that is, the following diagram commutes
$$
\xymatrix{
E^n(Y,B) \ar[r]^{\Psi} \ar[d]^{p^!} & F^n(Y,B) \ar[d]^{p^!} \\ 
E^n(X,A) \ar[r]^{\Psi}  & F^n(X,A). }
$$

Now let us go back to our setting. Let us first assume that $M$ is compact. Note that $H^*_c(M,\Z) = H^*(M,\Z)$ in this case. As it is a stable cohomology operation, the Chern character commutes with the natural map 
$s^*$ and also with the transfer map $t^!$, giving rise to the following commutative diagram:
$$ 
\xymatrix{ K^*(M) \ar[d]_{\ch} \ar[r]^{s^*} & K^*(M_g)  \ar[d]_{\ch} \ar[r]^{t^!} & K^*(M) \ar[d]_{\ch} \\ 
PH^*(M,\Q) \ar[r]^{s^*} & PH^*(M_g,\Q)  \ar[r]^{t^!} & PH^*(M,\Q) }
$$
showing that the Chern character map commutes with Hecke operators. 

For the case $M$ non-compact, the proof follows in the same way considering the diagram
$$ 
\xymatrix{ K^*(\overline{M}, \partial \overline{M}) \ar[d]_{\ch} \ar[r]^{\bar{s}^*} & K^*(\overline{M_g}, \partial \overline{M_g})  \ar[d]_{\ch} \ar[r]^{\bar{t}^{\ !}} & K^*(\overline{M}, \partial \overline{M}) \ar[d]_{\ch} \\ 
PH^*(\overline{M}, \partial \overline{M},\Q) \ar[r]^{\bar{s}^*} & PH^*(\overline{M_g}, \partial \overline{M_g},\Q)  \ar[r]^{\bar{t}^{\ !}} & PH^*(\overline{M}, \partial \overline{M},\Q) }
$$
where $\bar{s},\bar{t} : (\overline{M_g}, \partial \overline{M_g}) \rightarrow (\overline{M}, \partial \overline{M})$ are the extensions of $s,t : M_g \rightarrow M$ mentioned earlier.
\end{proof}

\begin{remark} The transfer map used above is an example of what is known as a {\em wrong way map}. 
In \cite[Remark 2.10(a)]{connes-skandalis}, Connes and Skandalis remark that given a $K$-oriented map $f: X \rightarrow Y$ between smooth manifolds, the wrong way maps $f^! : K(X) \rightarrow K(Y)$, induced by the Kasparov product with the class of the wrong way cycle $[f!]\in KK_{*}(C_{0}(X),C_{0}(Y))$, and $f^!: H_c(X,\Q) \rightarrow H_c(Y,\Q)$ commute under the Chern character {\it modulo} an error term $\textnormal{Td}(f)$ defined via the Todd genus of certain bundles that naturally arise. In our case, this error term vanishes and we get that the transfer map commutes with the Chern character as we proved above. 
\end{remark}

\begin{remark} Using the universal property of $KK$-theory, the Chern character can be obtained as the unique natural transformation
\[\textnormal{Ch}:KK_{*}(A,B)\to HL_{*}(A,B),\]
where $HL_{*}$ denotes bivariant local cyclic homology (see \cite{meyerbook, Puschnigg}). For a locally compact space $X$, the local cyclic homology of $C_{0}(X)$ recovers the compactly supported sheaf cohomology of $X$ (\cite[Theorem 11.7]{Puschnigg}). Thus ordinary cohomology admits an action of analytic Hecke operators via its structure as a module over $KK$-theory. It follows from the results of this section that the topological Hecke operators on ordinary cohomology arise from the analytic Hecke module structure.
\end{remark}

\section{Bianchi manifolds}  \label{bianchi}
In this section, we present a result about arithmetic non-compact hyperbolic 3-manifolds that complements the results obtained in Section $5$ of our previous paper \cite{mesland-sengun}. In that paper, for a Bianchi manifold $M$, we provided a Hecke equivariant isomorphism between $K_0(M)$ and 
$H_2(\overline{M}, \partial \overline{M},\Z)$ where $\overline{M}$ is the Borel-Serre compactification of $M$ (see \cite{borel-serre}). We show below that $H^2(\overline{M},\partial \overline{M},\Z)$ and $K^0(M)$ are isomorphic as Hecke modules and further argue that the cohomological pairing between $H^2$ and $H_2$ and the index pairing between $K^0$ and $K_0$ commute under these isomorphisms.

Let $\mathcal{O}$ be the ring of integers of an imaginary quadratic field and $\Gamma$ be a torsion-free finite index subgroup of the Bianchi group $\textrm{PSL}_2(\mathcal{O})$. Then 
$\Gamma$ acts freely and properly on the hyperbolic $3$-space $\mathbf{H}_3$. The associated hyperbolic $3$-manifold $M= \mathbf{H}_3 / \Gamma$ is known as a {\em Bianchi manifold}. It is well-known that any non-compact arithmetic hyperbolic $3$-manifold is commensurable with a Bianchi manifold.

\subsection{} \label{picard} For compact connected spaces $X$, denote by $\tilde{K}^{0}(X)$ the {\em reduced $K$-theory} of $X$, that is, the kernel of the map $K^{0}(X)\to \Z$ induced by $[E]\mapsto \textnormal{dim}_{\C}(E)$. Write $[n]\in K^{0}(X)$ the class of the trivial bundle $T^{n}$ of rank $n$ over $X$. For a vector bundle $E$, the top exterior power $\bigwedge^{\textnormal{dim} E} E$ is called the \emph{determinant line bundle} and denoted $\textnormal{det} \ E$. Let $\Pic(X)$ denote the {\em Picard group} of $X$, that is, the set of isomorphism classes of line bundles on $X$ together with the tensor product operation.

Let $M^+$ denote the one-point compactification of the Bianchi manifold $M$. Since $M^{+}$ is a CW-complex of dimension $3$, every complex vector bundle $E\to M^{+}$ splits as $E\simeq \det E\oplus T^{\textnormal{dim}_{\C}(E)-1}$ (see \cite[Corollary 4.4.1]{Weibel}). It follows from \cite[Corollary 2.6.2]{Weibel} that the map
\[\textnormal{dim}\oplus\det: K^{0}(M^{+})\to \mathbf{Z}\oplus \Pic(M^{+}),\quad E\mapsto (\textnormal{dim}_{\C}(E),[\det E]), \]\
is an isomorphism. Noting that $H^0(M^+,\Z) \simeq \Z$ and identifying $\Pic(M^+) \simeq H^2(M^+,\Z)$ via the first Chern class $c_1$, we obtain the isomorphism
$$ K^0(M^+) \to H^0(M^+,\Z) \oplus H^2(M^+,\Z)$$
induced by $[E] \mapsto \textnormal{dim}_\C(E) + c_1(\det E)$. Note that this map agrees with the Chern character since $E\simeq T^{\textnormal{dim}_{\C}(E)-1} \oplus \det E $ as mentioned above. By Prop. \ref{chern-equiv}, this isomorphism is Hecke equivariant.

Composing the Chern character with the projection map, we obtain a surjection $K^0(M^+) \rightarrow H^2(M^+,\Z)$ whose kernel is $\tilde{K}^0(M^+)=K^0(M)$. Noting that 
$H^2(M^+,\Z)$ is isomorphic to the compactly supported cohomology $H^2_c(M,\Z)$ which in turn is isomorphic to $H^2(\overline{M},\partial \overline{M},\Z)$, 
we obtain an isomorphism
\begin{equation} \label{chern-isom} K^0(M) \xrightarrow{\sim} H^2(\overline{M},\partial \overline{M},\Z) \end{equation}
that is Hecke equivariant.

\subsection{}
Given a line bundle $L\to \overline{M}$ and any connection $\nabla$ on $L$, let $F_\nabla=\textnormal{Tr}(\frac{-1}{2 \pi i}\nabla^{2})$ be the curvature 2-form of $\nabla$. Then it is well-known that $F_\nabla$ is closed and its image in $H^2(M,\R)$ is in fact integral and equals the first Chern class $c_1(L)$ of $L$. 

\begin{proposition} \label{relative} Let $(N,\partial N)\subset (\overline{M},\partial\overline{M})$ be an embedded surface, $L\to \overline{M}$ a line bundle that is trivial on $\partial \overline{M}$ and $\overline{N}$ be the closed subspace of $N$ obtained by removing an open neighborhood of $\partial N$ over which $L$ is trivial. View the interior $\mathring{N}$ of $N$ as a spin$^c$ surface with associated Dirac operator $\slashed{D}_{\mathring{N}}$ (see \cite[Section 5]{mesland-sengun}). We have
\begin{equation*}\langle [\slashed{D}_{\mathring{N}}], [L]-[1]\rangle=\int_{\overline{N}}F_\nabla\end{equation*}
for any connection $\nabla$ on $L$. Here $\langle \cdot , \cdot \rangle$ is the index pairing.
\end{proposition}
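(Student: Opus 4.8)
The plan is to compute both sides of the claimed identity by reducing the index pairing to an integral over $\mathring{N}$ of the index density, and then to control the difference between integrating over the non-compact $\mathring{N}$ and over the compact piece $\overline{N}$. First I would recall that for the spin$^c$ Dirac operator $\slashed{D}_{\mathring{N}}$ on the open surface $\mathring N$, twisting by a line bundle $L$ (equipped with a connection $\nabla$) produces a new Dirac-type operator $\slashed{D}_{\mathring N}^{L}$, and the class $[L]-[1]\in K^0(\mathring N)$ pairs with $[\slashed{D}_{\mathring N}]$ as $\langle[\slashed{D}_{\mathring N}],[L]-[1]\rangle=\mathrm{ind}(\slashed{D}_{\mathring N}^{L})-\mathrm{ind}(\slashed{D}_{\mathring N})$. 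The key analytic input is the relative index theorem (in the sense of Gromov--Lawson, or Bunke), which applies precisely because $L$ is trivial near $\partial\overline M$ and hence near the ends of $\mathring N$: outside the compact set $\overline N$ the operators $\slashed{D}_{\mathring N}^{L}$ and $\slashed{D}_{\mathring N}$ are unitarily equivalent via the trivialization of $L$, so their index difference is computed by the difference of the local Atiyah--Singer index densities, supported in $\overline N$.

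Next I would identify that index density. By the local index formula, the index density of the spin$^c$ Dirac operator twisted by $L$ on a surface is $\widehat{A}(\mathring N)\,e^{c_1(\mathcal S)/2}\,\mathrm{ch}(L)$ in degree $2$; subtracting the untwisted density kills the $\widehat A$-term and the spin$^c$ line contribution, leaving exactly $\mathrm{ch}(L)-\mathrm{ch}(1) = c_1(L)$ in degree $2$. Representing $c_1(L)$ by the Chern--Weil form $F_\nabla=\mathrm{Tr}\bigl(\tfrac{-1}{2\pi i}\nabla^2\bigr)$ of the given connection, the relative index theorem then yields
\begin{equation*}
\langle[\slashed{D}_{\mathring N}],[L]-[1]\rangle=\int_{\overline N}F_\nabla,
\end{equation*}
since the integrand vanishes identically on $\mathring N\setminus\overline N$ where $\nabla$ is the trivial connection (so that $F_\nabla=0$ there), making the integral over $\overline N$ equal to the integral over $\mathring N$. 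One subtlety worth spelling out: the choice of how far in to cut (i.e. the choice of collar neighborhood of $\partial N$ removed) does not matter, because on the collar $L$ is trivial and $\nabla$ may be taken trivial there, so $F_\nabla$ is supported away from it; alternatively, closedness of $F_\nabla$ together with its vanishing on the collar makes $\int_{\overline N}F_\nabla$ independent of the cut, and this is also what identifies the right-hand side with the relative pairing $\langle[\slashed D_{\mathring N}], c_1(L)\rangle$ using $H^2_c$.

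The main obstacle I expect is the careful bookkeeping in the relative index theorem on a manifold with ends that are not cylindrical and on which $\mathring N$ is merely an immersed/embedded open surface inside the noncompact $\overline M$: one must ensure that $\slashed{D}_{\mathring N}$ is Fredholm in the relevant sense (or work at the level of $K$-homology classes and the index pairing as defined in \cite{mesland-sengun}, Section 5, where $[\slashed D_{\mathring N}]$ already lives in $K$-homology of $C_0(\mathring N)$), and that the twisting by $L$ and the relative index theorem are compatible with that framework. I would handle this by phrasing everything in terms of the $KK$-theoretic/$K$-homological index pairing already set up in the paper, invoking the relative index theorem of Bunke (or the excision property of the index) to localize the computation to $\overline N$, and only then passing to the Chern--Weil representative. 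The remaining steps — the local index density computation and the vanishing of $F_\nabla$ on the removed collar — are routine.
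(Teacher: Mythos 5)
Your proof follows essentially the same route as the paper's: the paper likewise invokes the relative index theorem (Roe's version, \cite[Theorem 4.6]{Roerelativeindex}) to localize the index pairing to $\overline{N}$ in the form $\int_{\overline N}\hat A(\mathring N)\,\mathrm{Ch}(L|_{\mathring N})-\int_{\overline N}\hat A(\mathring N)$, and then observes that on a surface $\hat A(\mathring N)=1$ and $\mathrm{Ch}(L)=1+[F_\nabla]$, leaving exactly $\int_{\overline N}F_\nabla$. Your additional remarks on the cancellation of the spin$^c$ line contribution and the independence of the choice of cut are consistent with, and slightly more explicit than, the published argument.
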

\begin{proof}It follows from the relative index theorem of \cite[Theorem 4.6]{Roerelativeindex} that
\begin{equation*}\langle [\slashed{D}_{\mathring{N}}], [L]-[1]\rangle=\int_{\overline{N}}\hat{A}(\mathring{N})\textnormal{Ch}(L|_{\mathring{N}})-\int_{\overline{N}}\hat{A}(\mathring{N}).\end{equation*}
Here $L|_{\mathring{N}}$ is the restriction of $L$ in to interior of $N$. Observe that ${\rm Ch}(L|_{\mathring{N}})= 1+ c_1(L|_{\mathring{N}}) = 1+ [F_\nabla |_{\mathring{N}}]$ where $\nabla$ is any chosen connection on $L$ and $F_\nabla |_{\mathring{N}}$ is the restriction of its curvature to $\mathring{N}$. The $\hat{A}$-genus $\hat{A}(\mathring{N})$ of $\mathring{N}$ equals $1$ as it only has nonzero components in forms of degree $0 \textnormal{ mod } 4$. The claim follows.
\end{proof}

The following is not necessary for the main result of this section, however we note it as it quickly follows from the above and \cite[Lemma 2.22]{BallmannBruning2}.
\begin{corollary}\label{fullints} If $N$ has finite volume, we have
$$\langle [\slashed{D}_{\mathring{N}}], [L]-[1]\rangle=\int_{\mathring{N}}F_\nabla,$$
for any connection $\nabla$ on $L$.
\end{corollary}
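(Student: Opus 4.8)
The plan is to deduce the finite-volume statement from Proposition \ref{relative} by showing that the "tail" contribution vanishes. Proposition \ref{relative} computes the index pairing as $\int_{\overline{N}} F_\nabla$, where $\overline{N}$ is obtained from $N$ by deleting an open neighborhood of $\partial N$ on which $L$ is trivial. The difference between $\int_{\overline N} F_\nabla$ and $\int_{\mathring N} F_\nabla$ is the integral of $F_\nabla$ over the deleted collar-type region $W := \mathring N \setminus \overline N$, which is a (non-compact, but finite-volume) neighborhood of the cusp cross-sections where $L$ is trivial. So the entire content is the claim that $\int_W F_\nabla = 0$ once $N$ has finite volume and $L|_W$ is trivial.

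First I would use the hypothesis that $L$ is trivial on $W$ to choose the connection $\nabla$ cleverly. Since the pairing $\langle [\slashed{D}_{\mathring N}], [L]-[1]\rangle$ does not depend on the choice of $\nabla$ (changing $\nabla$ changes $F_\nabla$ by an exact form $d\eta$ with $\eta$ a global $1$-form on $\overline M$, which integrates to zero by Stokes on the closed surface pair, or alternatively one invokes that the left-hand side is manifestly connection-independent), it suffices to prove the identity for one convenient $\nabla$. On $W$, where $L$ admits a nowhere-vanishing section, one can pick $\nabla$ to be flat, so that $F_\nabla|_W = 0$ identically and hence $\int_W F_\nabla = 0$ trivially; extend this flat connection on $W$ to an arbitrary connection on all of $\overline M$ using a partition of unity. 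Then $\int_{\overline N} F_\nabla = \int_{\mathring N} F_\nabla$ for this particular $\nabla$, and by connection-independence of the pairing the identity holds for every connection. This is essentially the mechanism behind \cite[Lemma 2.22]{BallmannBruning2}, which one invokes to make the convergence of the improper integral $\int_{\mathring N} F_\nabla$ and the vanishing of the tail rigorous in the finite-volume setting.

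The main obstacle is the analytic bookkeeping at the cusps: one must make sure that $\int_{\mathring N} F_\nabla$ genuinely converges for an arbitrary smooth connection (not just the flat one), and that the value is independent of how one exhausts $\mathring N$; this is precisely where finite volume of $N$ enters, since $F_\nabla$ is a bounded $2$-form (being smooth and, near the cusps, controlled) and hence $L^1$ against a finite-volume measure. Invoking \cite[Lemma 2.22]{BallmannBruning2} handles exactly this point: it guarantees that for a finite-volume surface the integral of the curvature form over the interior agrees with the index pairing computed by the relative index theorem, so that the collar correction disappears. With that lemma in hand the corollary is immediate from Proposition \ref{relative}.
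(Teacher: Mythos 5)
Your overall strategy coincides with the paper's: the paper offers no argument beyond the assertion that the corollary ``quickly follows'' from Proposition \ref{relative} together with \cite[Lemma 2.22]{BallmannBruning2}, and you likewise reduce to Proposition \ref{relative} and delegate the remaining analysis to that lemma. The problem is that the one step you do try to carry out explicitly --- the passage from a single convenient connection to an arbitrary one --- is exactly the step that carries the content, and your argument for it does not hold up.

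Concretely: having checked the identity for a connection that is flat on the collar $W=\mathring{N}\setminus\overline{N}$, you pass to a general $\nabla$ by asserting that $\int_{\mathring{N}}F_\nabla$ is connection-independent because $F_{\nabla'}-F_{\nabla}=d\eta$ ``integrates to zero by Stokes.'' But $F_\nabla$ is a smooth $2$-form on the compact space $\overline{M}$ and $\partial N$ has measure zero, so $\int_{\mathring{N}}d\eta=\int_{N}d\eta=\int_{\partial N}\eta$, and this boundary term has no reason to vanish for a general difference of connections $\eta$ (choose $\eta$ near $\partial\overline{M}$ with $\int_{\partial N}\eta\neq 0$). Your fallback --- invoking connection-independence of the left-hand side --- is circular: that only forces the right-hand side to be connection-independent \emph{after} the identity is known for all connections. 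Two further symptoms of the same gap: your flat-connection argument never uses the finite-volume hypothesis, and there is in fact no convergence issue for $\int_{\mathring{N}}F_\nabla$ to be rescued (the integrand is smooth on the compact surface-with-boundary $N$). The finite-volume assumption, and the content of \cite[Lemma 2.22]{BallmannBruning2}, enter precisely to control this boundary/tail contribution via a Stokes-type theorem on finite-volume ends; that is the step your proposal disposes of formally rather than proves.
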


\begin{proposition} We have the equality
$$\langle [\slashed{D}_{\mathring{N}}], [L]-[1]\rangle=\langle [(N,\partial N)], c_{1}(L)\rangle$$ 
In particular, the isomorphisms 
$$K^0(M)\xrightarrow{\simeq} H^2(\overline{M},\partial\overline{M},\Z), \ \ \ \ \ \ K_0(M)\xleftarrow{\simeq} H_2(\overline{M},\partial\overline{M},\Z)$$ 
(see (\ref{chern-isom}) and \cite[Prop.5.6.]{mesland-sengun}) are compatible with the index pairing $$\langle - , - \rangle : K_0(M) \times K^0(M) \to \Z$$ 
and the integration pairing
$$\langle - , -  \rangle : H_2(\overline{M},\partial\overline{M},\Z) \times H^2(\overline{M},\partial\overline{M},\Z) \to \Z.$$
In other words, diagram (\ref{pairing}) of the Introduction is commutative.
\end{proposition}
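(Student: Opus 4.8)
The plan is to combine Proposition \ref{relative} with a relative Chern--Weil computation. By Proposition \ref{relative},
\[\langle [\slashed{D}_{\mathring{N}}], [L]-[1]\rangle=\int_{\overline{N}}F_\nabla\]
for any connection $\nabla$ on $L$ (in particular the right-hand side is already forced to be independent of $\nabla$), so the asserted equality reduces to
\[\int_{\overline{N}}F_\nabla=\langle [(N,\partial N)], c_{1}(L)\rangle,\]
where the right-hand side is the evaluation of the relative first Chern class $c_{1}(L)\in H^{2}(\overline{M},\partial\overline{M},\Z)$ --- which is defined precisely because $L|_{\partial\overline{M}}$ is trivial --- against the relative class $[(N,\partial N)]\in H_{2}(\overline{M},\partial\overline{M},\Z)$. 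This is a de Rham/Stokes statement once the connection is chosen adapted to the boundary.

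Concretely, first I would fix a collar neighbourhood of $\partial\overline{M}$ together with a trivialisation of $L$ over it, and choose $\nabla$ so that it restricts to the corresponding product (flat) connection there, arranging in addition that the open neighbourhood of $\partial N$ deleted to form $\overline{N}$ lies inside this collar; by Proposition \ref{relative} this costs no generality. Then $F_\nabla$ is a closed $2$-form on $\overline{M}$ that vanishes near $\partial\overline{M}$, hence defines a class in relative de Rham cohomology $H^{2}_{\mathrm{dR}}(\overline{M},\partial\overline{M})$, and relative Chern--Weil theory identifies this class with the image of the integral class $c_{1}(L)$ under $H^{2}(\overline{M},\partial\overline{M},\Z)\to H^{2}(\overline{M},\partial\overline{M},\R)$. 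Pairing a relative de Rham class represented by a form supported away from the boundary with the relative cycle $(N,\partial N)$ is just integration over $N$, and since $F_\nabla$ vanishes on the deleted collar this integral equals $\int_{\overline{N}}F_\nabla$; finally, the relative de Rham pairing is compatible with the integral relative pairing under the change-of-coefficients map, which yields the displayed equality (and reconfirms that $\int_{\overline{N}}F_\nabla\in\Z$, independently of all choices).

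It remains to deduce from this that the isomorphisms $K^{0}(M)\xrightarrow{\sim} H^{2}(\overline{M},\partial\overline{M},\Z)$ of (\ref{chern-isom}) and $H_{2}(\overline{M},\partial\overline{M},\Z)\xrightarrow{\sim} K_{0}(M)$ of \cite[Prop.~5.6]{mesland-sengun} intertwine the index pairing with the integration pairing, i.e. that (\ref{pairing}) commutes. I would check this on generators. By \ref{picard}, every class of $K^{0}(M)=\widetilde{K}^{0}(M^{+})$ is of the form $[L]-[1]$ for a line bundle $L$ that we may take trivial on $\partial\overline{M}$, and the left isomorphism is exactly $[L]-[1]\mapsto c_{1}(L)$. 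On the other side, $\overline{M}$ is a compact $3$-manifold with boundary, so by Poincar\'e--Lefschetz duality ($H_{2}(\overline{M},\partial\overline{M},\Z)\cong H^{1}(\overline{M},\Z)\cong[\overline{M},S^{1}]$, via regular preimages of a point) the classes of embedded surfaces $(N,\partial N)$ suffice to compute the pairing, and the isomorphism of \cite{mesland-sengun} sends $[(N,\partial N)]$ to $[\slashed{D}_{\mathring{N}}]$. Both pairings being bilinear, the equality proven above shows that the matched generators pair to the same integer, hence (\ref{pairing}) commutes.

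The step I expect to be the real obstacle is the middle one: making the collar, the trivialisation, the connection and the deleted neighbourhood mutually compatible, and rigorously pinning down that the curvature of such a boundary-adapted connection represents the \emph{relative} integral class $c_{1}(L)$ (not merely its image in $H^{2}(M,\R)$) in a manner compatible with the topological relative pairing. One must also make sure that the representation of classes of $H_{2}(\overline{M},\partial\overline{M},\Z)$ by embedded surfaces is legitimate in this Borel--Serre setting and that the identification $[(N,\partial N)]\mapsto[\slashed{D}_{\mathring{N}}]$ is precisely the one used in \cite{mesland-sengun}; granting those inputs, the remaining verifications are routine.
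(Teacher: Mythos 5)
Your proposal is correct and follows essentially the same route as the paper: check the identity on the generators $[L]-[1]$ (with $L$ trivial at infinity) and $[(N,\partial N)]$, identify the cohomological pairing with $\int_{\overline{N}}F_\nabla$, and invoke Proposition \ref{relative} together with the fact that the isomorphism of \cite[Prop.~5.6]{mesland-sengun} sends $[(N,\partial N)]$ to $[\slashed{D}_{\mathring{N}}]$. The only difference is that you spell out the relative Chern--Weil step with a boundary-adapted connection, which the paper treats as well known.
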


\begin{proof}  It follows from our discussion in Section \ref{picard} that every element of $K^0(M)$ is of the form $[L]-[1]$ where $1$ is the trivial line bundle and $L \to M$ is a line bundle that is trivial at  infinity.  
Under isomorphism (\ref{chern-isom}), the image of $[L]-[1]$ is $c_1(L)$. Every class in $H_2(\overline{M},\partial\overline{M},\Z)$ is represented by a properly embedded surface 
$(N,\partial N) \subset (\overline{M}, \partial \overline{M})$ (see \cite[Section 5]{mesland-sengun}). Then the pairing $\langle [(N,\partial N)], c_{1}(L)\rangle$ is given by the integral $\int_N F_\nabla$ where 
$\nabla$ is any connection on $L$ and $F_\nabla$ is the associated curvature $2$-form as above. As $L$ is trivial at infinity, we can choose closed $\overline{N} \subset \mathring{N}$ so that $L$ is trivial outside $\overline{N}$ and it then follows that $\int_N F_\nabla = \int_{\overline{N}} F_\nabla$. Observe that the image of $[(N,\partial N)]$ in $K_0(M)$ under the isomorphism given in \cite[Prop.5.6.]{mesland-sengun}) is 
$[\slashed{D}_{\mathring{N}}]$. Now by Prop. \ref{relative}, we have the claim.
\end{proof}

\section{The double-coset Hecke ring and $KK$-theory}\label{shimura}
We recall the construction of the Hecke operators via $KK$-theory as put forward in \cite{mesland-sengun}. We then show that the multiplication of double-cosets corresponds to the Kasparov product of the associated $KK$-classes. 

\subsection{Bimodules over the reduced crossed product}\label{redcross}
For a $\Gamma$-$C^{*}$-algebra $B$, the \emph{reduced crossed product} $B\rtimes_{r}\Gamma$ is obtained as a completion of the convolution algebra $C_{c}(\Gamma, B)$ (see, for example, \cite{kasparovconspectus}). Let $g\in C_{G}(\Gamma)$ and $d:=[\Gamma:\Gamma^{g}]$. The double coset $\Gamma g^{-1}\Gamma$ admits a decomposition as a disjoint union 
\begin{equation}\label{decomp}\Gamma g^{-1}\Gamma=\bigsqcup_{i=1}^{d} g_i\Gamma, \quad g_{i}=\delta_i g^{-1},\quad \Gamma=\bigsqcup_{i=1}^{d}\delta_{i}\Gamma^{g},\end{equation}
where $\delta_{i}\in\Gamma$ form a complete set of coset representatives for $\Gamma^{g}$. We choose to work with $g^{-1}$ in order for our formulae to be in line with those in \cite{mesland-sengun}. Consider the elements $$t_{i}(\gamma)=t^{g}_{i}(\gamma):= g^{-1}_{\gamma(i)}\gamma g_{i}\in g\Gamma g^{-1},$$ where $i\mapsto \gamma(i)$ is induced by the permutation of the cosets in Equation \eqref{decomp}. From \cite[Lemma 2.3] {mesland-sengun} we recall the relations
$$ t_{i}(\gamma_{1}\gamma_{2})=t_{\gamma_{2}(i)}(\gamma_{1})t_{i}(\gamma_{2}),\quad t_{i}(\gamma^{-1})=t_{\gamma^{-1}(i)}(\gamma)^{-1},$$
which will be used in the sequel without further ado.\newline

Let $S\subset C_{G}(\Gamma)$ be a subgroup containing $\Gamma$ and $B$ an $S$-$C^{*}$-algebra. The free right $B\rtimes_{r}\Gamma$ module  $T^{\Gamma}_{g}\simeq (B\rtimes_{r}\Gamma)^{d}$ carries a left $B\rtimes_{r}\Gamma$ module structure given by
\begin{equation}\label{fullheckerep}(t_{g}(f)\Psi)_{i}(\delta)=\sum_{\gamma} g_{i}^{-1}f(\gamma)t_{i}(\gamma^{-1})^{-1}\Psi_{\gamma^{-1}(i)}(t_{i}(\gamma^{-1})\delta).\end{equation}
Equivalently, we have the covariant representation 
\begin{equation}\label{heckerep}(t_{g}(b)\cdot\Psi)_{i}(\delta):= g_{i}^{-1}(b)\Psi_{i}(\delta),\quad (t_{g}(u_{\gamma})\Psi)_{i}(\delta):= t_{i}(\gamma^{-1})^{-1}(\Psi_{\gamma^{-1}(i)}(t_{i}(\gamma^{-1})\delta )). \end{equation}
Details of the construction,  as well as the following definition, can be found in \cite[Section 2]{mesland-sengun}.

\begin{definition} Let $B$ be a separable $S$-$C^{*}$-algebra and $C$ a separable $C^{*}$-algebra. The \emph{Hecke operators}
\[T_{g}: KK_{*}(B\rtimes_{r}\Gamma,C)\to KK_{*}(B\rtimes_{r}\Gamma, C),\quad T_{g}: KK_{*}(C, B\rtimes_{r}\Gamma)\to KK_{*}(C, B\rtimes_{r}\Gamma).\]
are defined to be the Kasparov product with the class $[T^{\Gamma}_{g}]\in KK_{0}(B\rtimes_{r} \Gamma, B\rtimes_{r} \Gamma)$.
\end{definition}
We now give an equivalent description of the bimodules $T_{g}^{\Gamma}$. Consider the function space
\[C_c(\Gamma g^{-1} \Gamma, B)=\mathbf{C}[\Gamma g^{-1} \Gamma]\otimes^{\textnormal{alg}}_{\mathbf{C}}B.\]

The convolution product makes $C_{c}(\Gamma g^{-1} \Gamma, B)$ into a $C_{c}(\Gamma, B)$-bimodule:
\[f * \Psi(\xi)=\sum_{\gamma\in\Gamma}f(\gamma)\gamma(\Psi(\gamma^{-1}\xi)), \quad \Psi * f(\xi):=\sum _{\gamma\in\Gamma} \Psi(\xi\gamma)\xi f(\gamma^{-1}), \quad \xi\in \Gamma g^{-1}\Gamma.\]
Moreover we define the inner product
\begin{equation}\label{cosetrepinnerprod}
\langle\Phi,\Psi\rangle(\delta):=\sum_{\xi\in\Gamma g^{-1}\Gamma}\xi^{-1}(\Phi(\xi)^{*}\Psi(\xi\delta)),
\end{equation}
which makes $C_c(\Gamma g^{-1} \Gamma, B)$ into a pre-Hilbert-$C^{*}$-bimodule over $C_{c}(\Gamma,B)$.
\begin{lemma}\label{cosetHecke} For $g\in S\subset  C_{G}(\Gamma)$ the map 
\[\alpha: C_{c}(\Gamma g^{-1} \Gamma,B)\to C_{c}(\Gamma,B)^{d}\subset T^{\Gamma}_{g}, \quad \alpha(\Psi)_{i}(\delta):= g_{i}^{-1}\Psi (g_{i}\delta),\]
induces a unitary isomorphism of $B\rtimes_{r}\Gamma$-bimodules.
\end{lemma}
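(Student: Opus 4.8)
The plan is to exhibit $\alpha$ as an explicit map and verify directly that it intertwines the left and right module structures and preserves the inner product, from which unitarity follows since $\alpha$ is visibly a bijection onto $C_c(\Gamma,B)^d$ with inverse $\beta(\Psi)(g_i\delta) := g_i(\Psi_i(\delta))$ (extended by the fact that $\Gamma g^{-1}\Gamma = \bigsqcup_i g_i\Gamma$, so every $\xi \in \Gamma g^{-1}\Gamma$ is uniquely $\xi = g_i\delta$ for some $i$ and $\delta \in \Gamma$). The crux is therefore a sequence of routine but bookkeeping-heavy identities, organised as follows.

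First I would check the right $C_c(\Gamma,B)$-module structures agree: starting from $\Psi * f(\xi) = \sum_\gamma \Psi(\xi\gamma)\xi f(\gamma^{-1})$, apply $\alpha$ and substitute $\xi = g_i\delta$, so that $\alpha(\Psi * f)_i(\delta) = g_i^{-1}\sum_\gamma \Psi(g_i\delta\gamma) g_i\delta f(\gamma^{-1})$; reindexing $\gamma \mapsto \delta\gamma$ (or comparing directly with the formula for the right action on $T_g^\Gamma$, which in the conventions of \cite{mesland-sengun} reads $(\Psi \cdot f)_i(\delta) = \sum_\gamma \Psi_i(\delta\gamma)f(\gamma^{-1})\delta$-style — one uses the covariant representation \eqref{heckerep} on the right) one matches the two expressions. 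Second, and this is the step requiring the most care, I would verify that $\alpha$ intertwines the left actions: one must show $\alpha(f * \Psi) = t_g(f)\,\alpha(\Psi)$, where the right side is given by \eqref{fullheckerep}. Here the key is the relation between the decomposition $\Gamma g^{-1}\Gamma = \bigsqcup_i g_i\Gamma$ and the cocycle $t_i(\gamma) = g_{\gamma(i)}^{-1}\gamma g_i$: writing $f * \Psi(\xi) = \sum_\gamma f(\gamma)\gamma(\Psi(\gamma^{-1}\xi))$ and putting $\xi = g_i\delta$, the element $\gamma^{-1}g_i$ lies in $g_{\gamma^{-1}(i)}\Gamma$ and equals $g_{\gamma^{-1}(i)} t_i(\gamma^{-1})\delta'$ after one untangles the permutation, which is exactly what produces the terms $t_i(\gamma^{-1})^{-1}\Psi_{\gamma^{-1}(i)}(t_i(\gamma^{-1})\delta)$ of \eqref{fullheckerep}. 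The identities $t_i(\gamma_1\gamma_2) = t_{\gamma_2(i)}(\gamma_1)t_i(\gamma_2)$ and $t_i(\gamma^{-1}) = t_{\gamma^{-1}(i)}(\gamma)^{-1}$ recalled before the lemma are precisely the tools needed to make these substitutions consistent.

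Third, I would check that $\alpha$ is isometric for the inner products, i.e.\ that $\langle \Phi, \Psi \rangle(\delta) = \sum_{\xi \in \Gamma g^{-1}\Gamma} \xi^{-1}(\Phi(\xi)^* \Psi(\xi\delta))$ agrees with the standard $C_c(\Gamma,B)$-valued inner product on $T_g^\Gamma = (B\rtimes_r\Gamma)^d$, namely $\sum_i \langle \alpha(\Phi)_i, \alpha(\Psi)_i\rangle$. Again one substitutes $\xi = g_i\delta'$ over the coset decomposition and uses $\xi^{-1}(\cdot) = \delta'^{-1}g_i^{-1}(\cdot)$ together with $\alpha(\Phi)_i(\delta') = g_i^{-1}\Phi(g_i\delta')$ to see the two sums coincide termwise. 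Finally one records that $\alpha$ is bijective (inverse $\beta$ as above), defined on the dense convolution subalgebras, intertwines the bimodule structures and preserves the inner product, hence extends by continuity to the completions and is a unitary isomorphism of $B\rtimes_r\Gamma$-bimodules; this also re-proves that $[T_g^\Gamma]$ is independent of the choice of coset representatives $\delta_i$. The main obstacle is purely notational: keeping track of which index the permutation $\gamma(i)$ acts on and correctly matching the placement of the twisting elements $g_i^{-1}$ and $t_i(\gamma)$ on the left versus the right of the $B$-values, so I would set up the computation once with fixed conventions for $\xi = g_i\delta$ and push everything through mechanically.
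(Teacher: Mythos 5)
Your proposal is correct and takes essentially the same route as the paper: the paper likewise verifies that $\alpha$ preserves the $C_{c}(\Gamma,B)$-valued inner product (whence it extends to a unitary on completions, which is automatically a right-module map, making your separate right-module check redundant) and then computes $\alpha(f*\Psi)=t_{g}(f)\alpha(\Psi)$ directly using the substitution $\gamma^{-1}g_{i}=g_{\gamma^{-1}(i)}t_{i}(\gamma^{-1})$ and the cocycle identities for $t_{i}(\gamma)$, with surjectivity onto a dense submodule coming from the coset decomposition $\Gamma g^{-1}\Gamma=\bigsqcup_{i}g_{i}\Gamma$ exactly as you describe.
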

\begin{proof} The decomposition \eqref{decomp} shows that the map $\alpha$ has dense range.
Moreover $\alpha$ preserves the inner product:
\begin{align*}
\langle &\alpha(\Psi),\alpha(\Phi)\rangle (\delta)=\sum_{i}\alpha(\Psi)_{i}^{*}\alpha(\Phi)_{i}(\delta)=\sum_{i}\sum_{\gamma}\alpha(\Psi)_{i}^{*}(\gamma)\gamma\alpha(\Phi)_{i}(\gamma^{-1}\delta) \\
&=\sum_{i}\sum_{\gamma}\gamma(\alpha(\Psi)_{i}(\gamma^{-1})^{*}\alpha(\Phi)_{i}(\gamma^{-1}\delta))=\sum_{i}\sum_{\gamma}\gamma g_{i}^{-1}(\Psi(g_{i}\gamma^{-1})^{*}\Phi(g_{i}\gamma^{-1}\delta))\\
&=\sum_{\xi\in\Gamma g^{-1}\Gamma}\xi^{-1}(\Phi(\xi)^{*}\Psi(\xi\delta))=\langle \Psi,\Phi\rangle(\delta),
\end{align*}
from which it follows that $\alpha$ induces a unitary isomorphism on the $C^{*}$-module completions, which is in particular a right module map.

For the left module structure we compute
\begin{align}
\nonumber \alpha(f*\Psi)_{i}(\delta)&=g_{i}^{-1}(\sum_{\gamma\in\Gamma}f(\gamma)\gamma\Psi(\gamma^{-1}g_{i}\delta))=\sum_{\gamma\in\Gamma}g_{i}^{-1}f(\gamma)g_{i}^{-1}\gamma \Psi(g_{\gamma^{-1}(i)}t_{i}(\gamma^{-1})\delta))\\
\nonumber &=\sum_{\gamma\in\Gamma}g_{i}^{-1}f(\gamma)t_{i}(\gamma^{-1})^{-1}g_{\gamma^{-1}(i)}^{-1}\Psi(g_{\gamma^{-1}(i)}t_{i}(\gamma^{-1})\delta)\\ &=\sum_{\gamma\in\Gamma}g_{i}^{-1}f(\gamma)t_{i}(\gamma^{-1})^{-1}\alpha(\Psi)_{\gamma^{-1}(i)}(t_{i}(\gamma^{-1})\delta)=(t_{g}(f))(\alpha\Psi)_{i}(\delta)\label{lefthecke},
\end{align}

and we are done.
\end{proof}
Thus, the bimodules implementing the Hecke operators are completions of the $B$-valued functions on the associated double coset.

\subsection{The double-coset Hecke ring}
Let $S$ be a subgroup of $C_G(\Gamma)$ that contains $\Gamma$.
Following Shimura, we define the \emph{Hecke ring} $\mathbf{Z}[\Gamma, S]$ as the free abelian group on the double cosets $\Gamma g\Gamma$ with $g\in S$, equipped with the product
\begin{equation}\label{Shimproddef}  [\Gamma g^{-1}\Gamma]\cdot [\Gamma h^{-1}\Gamma]:=\sum_{k=1}^{K}m_{k}[\Gamma g_{i(k)}h_{j(k)}\Gamma],\end{equation}
where we have fixed finite sets $I$ and $J$ and coset representatives $\{g_{i}:i\in I\}$ and $\{h_{j}:j\in J\}$ for $\Gamma^{g}$ and $\Gamma^{h}$ in $\Gamma$, respectively. Moreover $m_{k}, i(k)$ and $j(k)$ are such that 
\begin{equation}\label{biactiondecomp} m_{k}:=\#\{(i,j): g_{i}h_{j}\Gamma=g_{i(k)}h_{j(k)}\Gamma\}\quad\textnormal{and}\quad \Gamma g^{-1} \Gamma h^{-1}\Gamma=\bigsqcup_{k=1}^{K}\Gamma g_{i(k)}h_{j(k)}\Gamma,\end{equation}
is a disjoint union. For well-definedness and other details of the construction we refer to \cite[Chapter 3]{Shimura}. We wish to show that, for an arbitrary $S$-$C^{*}$-algebra $B$, the map 
\begin{equation}T:\mathbf{Z}[\Gamma, S]\to KK_{0}(B\rtimes_{r}\Gamma, B\rtimes_{r}\Gamma),\quad [\Gamma g^{-1}\Gamma ]\mapsto T^{\Gamma}_{g},
\end{equation}
is a ring homomorphism. To this end, we introduce the following notions. By a \emph{bi}-$\Gamma$-\emph{set} we mean a set $V$ that carries both a left- and a right $\Gamma$-action, and the actions commute in the sense that for all $\gamma,\delta\in\Gamma$ and $v\in V$ we have $\gamma(v\delta)=(\gamma v)\delta$. 

The $\Gamma$-\emph{product} of a pair $(V,W)$ of bi-$\Gamma$-sets is the quotient of the Cartesian product $V\times W$ by the equivalence relation
\[(v,w)\sim (v',w')\Leftrightarrow \exists\gamma\in\Gamma\quad  v'=v\gamma,\quad w'=\gamma^{-1}w,\]
and is denoted by $V\times_{\Gamma} W$. The equivalence class of the pair $(v,w)$ is denoted $[v,w]$. The $\Gamma$-product is a bi-$\Gamma$-set via the induced left- and right $\Gamma$-actions
\[[v,w]\gamma:=[v,w\gamma],\quad \gamma [v,w]:=[\gamma v, w].\]
Let $\Gamma\subset S\subset C_{G}(\Gamma)$ be a subgroup and $V$ a bi-$\Gamma$-set. We say that $V$ is \emph{anchored in} $S$ if there is given a map $m:V\to S$ such that $m(\gamma v\delta)=\gamma m(v)\delta$ for all $v\in V$, $\gamma, \delta\in\Gamma$. We refer to $m$ as the \emph{anchor}. Of course any double coset $\Gamma g\Gamma$ with $g\in S$ is anchored in $S$ via the inclusion map.
\begin{lemma} Let $V$ and $W$ be bi-$\Gamma$-sets with anchor maps $m_{V}:V\to S, m_{W}:W\to S$. Then their $\Gamma$-product $V\times_{\Gamma}W$
is anchored in $S$ via the \emph{product anchor} $[v,w]\mapsto m_{V}(v)m_{W}(w)$.
\end{lemma}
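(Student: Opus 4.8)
The plan is to verify directly that the proposed product anchor is well-defined on the quotient $V\times_\Gamma W$ and that it satisfies the equivariance property required of an anchor. These are both short computations, and the statement has no real content beyond checking that the two structures (the $\Gamma$-product and the anchoring) are compatible; the only mild subtlety is the well-definedness modulo the equivalence relation, which is precisely where the commutativity of the left and right $\Gamma$-actions and the bi-equivariance $m(\gamma v\delta)=\gamma m(v)\delta$ of the individual anchors come into play.

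\begin{proof}
First I would check that the formula $\mu([v,w]):=m_V(v)m_W(w)$ does not depend on the chosen representative of the class $[v,w]$. If $(v',w')\sim(v,w)$, then $v'=v\gamma$ and $w'=\gamma^{-1}w$ for some $\gamma\in\Gamma$, so using that $m_V$ and $m_W$ are anchors we get
\[
m_V(v')m_W(w')=m_V(v\gamma)m_W(\gamma^{-1}w)=\bigl(m_V(v)\gamma\bigr)\bigl(\gamma^{-1}m_W(w)\bigr)=m_V(v)m_W(w),
\]
where in the last step the $\gamma\gamma^{-1}$ cancels as an identity in $S\subset C_G(\Gamma)$. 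Hence $\mu:V\times_\Gamma W\to S$ is a well-defined map.

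Next I would check the anchor equivariance $\mu(\gamma[v,w]\delta)=\gamma\,\mu([v,w])\,\delta$ for all $\gamma,\delta\in\Gamma$. By definition of the bi-$\Gamma$-action on $V\times_\Gamma W$ we have $\gamma[v,w]\delta=[\gamma v, w\delta]$, and therefore
\[
\mu(\gamma[v,w]\delta)=\mu([\gamma v,w\delta])=m_V(\gamma v)\,m_W(w\delta)=\bigl(\gamma\, m_V(v)\bigr)\bigl(m_W(w)\,\delta\bigr)=\gamma\,\bigl(m_V(v)m_W(w)\bigr)\,\delta,
\]
again using that each of $m_V,m_W$ is an anchor and that multiplication in $S$ is associative. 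This is exactly $\gamma\,\mu([v,w])\,\delta$, so $\mu$ is an anchor for $V\times_\Gamma W$ in the sense of the definition above, and the lemma is proved.
\end{proof}

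The main (and only) obstacle is purely bookkeeping: one must be careful that the cancellation $\gamma\gamma^{-1}$ and the reassociation of products take place in the ambient group $S\subset C_G(\Gamma)$, where the elements $m_V(v), m_W(w)$ genuinely live, rather than trying to interpret them inside $\Gamma$ alone. Once this is kept straight, both claims follow immediately from the defining property of the individual anchors.
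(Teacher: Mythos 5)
Your proof is correct and is precisely the direct verification the paper has in mind (the paper simply states ``The proof of this is straightforward'' and omits it): well-definedness on the quotient via cancellation of $\gamma\gamma^{-1}$ in $S$, and bi-equivariance from the anchor property of $m_V$ and $m_W$. Nothing further is needed.
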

The proof of this is straightforward. Note that if $V$ and $W$ are double $\Gamma$-cosets in $S$, anchored via their embeddings into $S$, then the product anchor of $V\times_{\Gamma}W$ need not be injective.\newline

We wish to relate the anchored bi-$\Gamma$-sets $\Gamma g^{-1}\Gamma\times_{\Gamma} \Gamma h^{-1}\Gamma$ and $\bigsqcup_{k=1}^{K}\bigsqcup_{\ell=1}^{m_{k}}\Gamma z_{k}\Gamma$. By virtue of Equation \eqref{biactiondecomp}  
we fix, once and for all, for each $z_{k}$ and $1\leq \ell \leq m_{k}$  a choice of 
\emph{distinct} indices $i(k,\ell),j(k,\ell)$
such that $z_{k}\Gamma=g_{i(k,\ell)}h_{j(k,\ell)}\Gamma$. We thus write $z_{(k,\ell)}=g_{i(k,\ell)}h_{j(k,\ell)}$. 
Consider the left action of $\Gamma$ on the finite set $I\times J$ given by
\begin{equation}\label{labelaction}\gamma(i,j):=(\gamma(i),t^{g}_{i}(\gamma)(j)).\end{equation}

\begin{lemma}\label{bieq}With the above choices, the map 
\begin{align*}\omega: \bigsqcup_{k=1}^{K}\bigsqcup_{\ell=1}^{m_{k}}\Gamma z_{(k,\ell)}\Gamma &\to \Gamma g^{-1}\Gamma\times_{\Gamma} \Gamma h^{-1}\Gamma\\
\gamma z_{(k,\ell)}\delta &\mapsto [\gamma g_{i(k,\ell)},h_{j(k,\ell)}\delta],\end{align*}
where $i=i(k,\ell), j=j(k,\ell)$, is a $\Gamma$-bi-equivariant bijection of $S$-anchored bi-$\Gamma$-sets.
\end{lemma}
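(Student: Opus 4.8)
The plan is to verify directly that $\omega$ is well-defined, bijective, $\Gamma$-bi-equivariant, and anchor-preserving, treating each property in turn. The map is specified on a chosen transversal-type decomposition, so the first and most delicate task is well-definedness: a general element of $\Gamma z_{(k,\ell)}\Gamma$ can be written as $\gamma z_{(k,\ell)}\delta$ in several ways, namely $\gamma z_{(k,\ell)}\delta = \gamma' z_{(k,\ell)}\delta'$ precisely when $\gamma' = \gamma\beta$ and $\delta' = \beta^{-1}z_{(k,\ell)}^{-1}\cdot z_{(k,\ell)}\delta$ for some $\beta$ in the stabiliser situation — more precisely when $\gamma^{-1}\gamma' z_{(k,\ell)} = z_{(k,\ell)}\delta'\delta^{-1}$, i.e. when there is $\eta\in\Gamma$ with $\gamma' = \gamma\eta^{-1}$ and $z_{(k,\ell)}\delta' = \eta z_{(k,\ell)}\delta$, hence $\delta' = z_{(k,\ell)}^{-1}\eta z_{(k,\ell)}\delta$ provided $z_{(k,\ell)}^{-1}\eta z_{(k,\ell)}\in\Gamma$, that is $\eta\in\Gamma_{z_{(k,\ell)}} = z_{(k,\ell)}\Gamma z_{(k,\ell)}^{-1}\cap\Gamma$. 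I would show that under such a change, $[\gamma' g_i, h_j\delta']$ equals $[\gamma g_i, h_j\delta]$ in the $\Gamma$-product. This is where the relation $z_{(k,\ell)} = g_{i}h_{j}$ and the cocycle identities for the $t^g_i$ enter: writing $\eta g_i = g_{?}\,t^{(\cdot)}_{?}(\cdot)$ via the coset permutation and pushing the correction through the equivalence relation of $\times_\Gamma$ should absorb exactly the ambiguity. I expect this to be the main obstacle, since one must be careful that the \emph{same} pair of indices $(i(k,\ell),j(k,\ell))$ is legitimately reused, which is why the indices were fixed "once and for all" and chosen distinct in the statement.

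Next I would check $\Gamma$-bi-equivariance, which is essentially immediate from the definition: for $\gamma_0\in\Gamma$ one has $\omega(\gamma_0\gamma z_{(k,\ell)}\delta) = [\gamma_0\gamma g_i, h_j\delta] = \gamma_0[\gamma g_i, h_j\delta] = \gamma_0\omega(\gamma z_{(k,\ell)}\delta)$ directly from the left action on $\Gamma g^{-1}\Gamma\times_\Gamma\Gamma h^{-1}\Gamma$, and symmetrically on the right with $\delta\mapsto\delta\delta_0$. Anchor-compatibility is likewise a computation: the anchor of $\gamma z_{(k,\ell)}\delta$ in $\bigsqcup\Gamma z_k\Gamma$ is $\gamma z_{(k,\ell)}\delta = \gamma g_i h_j\delta$, while the product anchor of $[\gamma g_i, h_j\delta]$ is $(\gamma g_i)(h_j\delta) = \gamma g_i h_j\delta$; these agree on the nose because $z_{(k,\ell)} = g_{i(k,\ell)}h_{j(k,\ell)}$ by construction.

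Finally, for bijectivity I would argue by a counting/orbit comparison together with explicit surjectivity. Surjectivity: given $[x, y]\in\Gamma g^{-1}\Gamma\times_\Gamma\Gamma h^{-1}\Gamma$, using the coset decompositions \eqref{decomp} write $x = \gamma g_i$ and, after adjusting the $\Gamma$-ambiguity in the pair, $y = h_j\delta$ for suitable $i,j,\gamma,\delta$; the pair $g_ih_j\Gamma$ determines some $z_k\Gamma$, and among the $m_k$ choices of $\ell$ there is one with $(i,j) = (i(k,\ell),j(k,\ell))$ after absorbing a further $\Gamma$-move — this is exactly what the definition of $m_k$ in \eqref{biactiondecomp} guarantees. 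For injectivity, one can either run the well-definedness argument in reverse, or simply count: both sides are finite unions of double cosets, and comparing $\Gamma$-orbit cardinalities using $[\Gamma:\Gamma^{z_{(k,\ell)}}]$ on the left and the index of the product anchor's fibres on the right, one finds the two anchored bi-$\Gamma$-sets have the same cardinality in each "fibre", so the bi-equivariant surjection $\omega$ is forced to be a bijection. I would present injectivity via the reverse of the well-definedness computation, since that keeps everything in terms of the explicit formula and the $t^g_i$-relations already recalled.
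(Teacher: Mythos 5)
Your instinct that well-definedness of $\omega$ is the delicate point is correct, but you leave that step as an expectation rather than a proof, and this is where the proposal has a genuine gap. Two factorisations $\gamma z_{(k,\ell)}\delta=\gamma' z_{(k,\ell)}\delta'$ of the same element differ by an element $\eta=(\gamma')^{-1}\gamma\in\Gamma\cap z_{(k,\ell)}\Gamma z_{(k,\ell)}^{-1}$, with $\delta'=z_{(k,\ell)}^{-1}\eta z_{(k,\ell)}\delta$. To identify $[\gamma g_{i},h_{j}\delta]$ with $[\gamma' g_{i},h_{j}\delta']$ in the $\Gamma$-product you must move $\beta=g_{i}^{-1}\eta g_{i}$ across the pairing, and the defining equivalence relation of $\times_{\Gamma}$ only permits this when $\beta\in\Gamma$. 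What the hypotheses actually give is $\beta\in g_{i}^{-1}\Gamma g_{i}=g\Gamma g^{-1}$, and the cocycle identities for $t^{g}_{i}$ apply only to elements of $\Gamma$ acting on the cosets, so they do not ``absorb the ambiguity'' in the way you hope: there is in general no way to rewrite $\gamma' g_{i}$ as $(\gamma g_{i})\cdot(\textnormal{element of }\Gamma)$. The computation you defer is therefore not a routine verification, and your proposed injectivity argument ``by reversing the well-definedness computation'' would inherit the same unproved step.

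The paper's proof is organised so as not to rely on this computation. It refines each double coset into left cosets, $\Gamma z_{k}\Gamma=\bigsqcup_{n}\gamma^{k}_{n}z_{k}\Gamma$, so that every element of the domain has a distinguished expression $\gamma^{k}_{n}z_{(k,\ell)}\delta$; the formula for $\omega$ is evaluated on these representatives, each left coset is carried onto a set $\{g_{i'}\}\times h_{j'}\Gamma$ of canonical representatives of the $\Gamma$-product, and bijectivity is reduced to injectivity of the index map $(n,k,\ell)\mapsto \gamma^{k}_{n}(i(k,\ell),j(k,\ell))$ combined with the degree count $\sum_{k}m_{k}d_{k}=|I||J|$ from Shimura's Proposition 3.2 --- this is where the ``distinct indices'' hypothesis is used, not in a well-definedness argument. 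Your surjectivity sketch via canonical representatives is close to this in spirit, but to complete the lemma you must either adopt the paper's left-coset bookkeeping (and then still address why the formula is independent of the representative, equivalently why $\omega$ is left $\Gamma$-equivariant), or confront directly whether conjugation by $g_{i(k,\ell)}^{-1}$ carries $\Gamma\cap z_{(k,\ell)}\Gamma z_{(k,\ell)}^{-1}$ into $\Gamma$. As written, your outline does neither, so the central claim remains unestablished.
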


\begin{proof} By construction, $\omega$ is $\Gamma$-bi-equivariant and respects the anchors. We need only show that it is bijective.
This is achieved as follows: for each $k$ choose
\[\gamma^{k}_{1}=1,\gamma^{k}_{2},\cdots, \gamma^{k}_{d_{k}}\in \Gamma,\quad \textnormal{with } \Gamma z_{k}\Gamma=\bigsqcup_{n=1}^{d_{k}}\gamma^{k}_{n}z_{k}\Gamma.\]
We thus have
\begin{equation}\label{finegrain}\bigsqcup_{k=1}^{K}\bigsqcup_{\ell=1}^{m_{k}}\Gamma z_{(k,\ell)}\Gamma=\bigsqcup_{k=1}^{K}\bigsqcup_{\ell=1}^{m_{k}}\bigsqcup_{n=1}^{d_{k}}\gamma^{k}_{n}g_{i(k,\ell)}h_{j(k,\ell)}\Gamma.\end{equation}

The identities
\[[g_{i}\gamma, h_{j}\delta]=[g_{i},\gamma h_{j}\delta]=[g_i, h_{\gamma(j)} t^{h}_{j}(\gamma)\delta],\]
show that every element in the $\Gamma$-product $\Gamma g^{-1}\Gamma\times_{\Gamma} \Gamma h^{-1}\Gamma$ has a representative of the form $[g_i, h_j\gamma]$ and such representatives are unique because $g_i$ and $h_j$ form a complete set of coset representatives. We so obtain a set bijection 
\[\Gamma g^{-1}\Gamma\times_{\Gamma} \Gamma h^{-1}\Gamma\to \bigsqcup_{(i,j)\in I\times J} \{g_{i}\}\times h_{j}\Gamma,\quad [g_{i}\gamma, h_{j}\delta]\mapsto [g_i, h_{\gamma(j)} t^{j}_{h}(\gamma)\delta]. \]
It follows that $\omega$ restricts to bijections $$\omega: \gamma^{k}_{n}g_{i(k,\ell)}h_{j(k,\ell)}\Gamma \rightarrow \{g_{\gamma^{k}_{n}(i(k,\ell))}\}\times h_{t^{g}_{i}(\gamma^{k}_{n})(j(k,\ell))}\Gamma .$$ Therefore it suffices to show that the map
\begin{align*} N\times K \times L &\rightarrow I\times J \\
(n,k,\ell)&\mapsto \gamma^{k}_{n}(i(k,\ell),j(k,\ell)),\end{align*}
is bijective. By \cite[Proposition 3.2]{Shimura} it holds that
$$\sum_{k=1}^{K}m_{k}d_{k}=|I||J|=|I\times J|,$$ 
and thus 
we need only show that this map is injective, and then use a counting argument to obtain surjectivity. To this end we will prove that the equality
\begin{equation}\label{actionequal}\gamma^{k}_{n}(i(k,\ell),j(k,\ell))=\gamma^{k'}_{n'}(i(k',\ell'),j(k',\ell'))\end{equation}
implies that $(n,k,\ell)=(n',k',\ell')$. By \eqref{labelaction}, \eqref{actionequal} implies that
\[\gamma^{k}_{n}g_{i(k,\ell)}h_{j(k,\ell)}\Gamma=\gamma^{k'}_{n'}g_{i(k',\ell')}h_{j(k',\ell')}\Gamma\quad\textnormal{and thus}\quad \Gamma g_{i(k,\ell)}h_{j(k,\ell)}\Gamma=\Gamma g_{i(k',\ell')}h_{j(k',\ell')}\Gamma. \]
This in turn implies that $k=k'$ and thus $\gamma_{n}^{k}z_{k}\Gamma=\gamma^{k}_{n'}z_{k}\Gamma,$
so it folllows that $n=n'$. Lastly, we are left with $\gamma^{k}_{n}(i(k,\ell))=\gamma^{k}_{n}(i(k,\ell'))$, so $i(k,\ell)=i(k,\ell')$ which by construction implies that $\ell=\ell'$. This shows that the map $(n,k,\ell)\mapsto \gamma^{k}_{n}(i(k,\ell),j(k,\ell))$ is injective.
\end{proof}
Now let $V$ be a $\Gamma$-set with anchor $m:V\to S$ and $X$ a $S$-$(A,B)$-bimodule. We always consider $V$ as a discrete set.
We equip $C_{c}(V, X)$ with a $C_{c}(\Gamma,B)$ valued inner product via
\begin{align*}\langle \Phi,\Psi\rangle(\delta):=\sum_{v\in V}m(v)^{-1}\langle \Phi(v),\Psi(v\delta)\rangle.
\end{align*}
and left and right module structures via the $\Gamma$-action:
\begin{align*}&f *\Psi(v):=\sum_{\gamma}f(\gamma)\gamma\Psi(\gamma^{-1}v),\quad \Psi*f(v):=\sum_{\gamma}\Psi(v\gamma)m(v\gamma) f(\gamma^{-1}).
\end{align*}
Thus the completion gives a $C^{*}$-$(A\rtimes_{r}\Gamma,B\rtimes_{r}\Gamma)$-bimodule. Note that if $u:X\to Y$ is an $S$-equivariant unitary bimodule isomorphism and $\omega:W\to V$ an isomorphism  of $S$-anchored bi-$\Gamma$-sets, then
\[C_{c}(V,X)\to C_{c}(W,Y),\quad \Psi\mapsto u\circ\Psi\circ\omega,\]
is a unitary bimodule isomorphism.

By Lemma \ref{cosetHecke}, the bimodule $T^{\Gamma}_{g}$ for $g\in S$ is isomorphic to the completion of $C_{c}(\Gamma g^{-1}\Gamma,B)$ with anchor $m:\Gamma g^{-1}\Gamma \to S$ the set inclusion, and is thus a special case of the above construction. The formalism of anchored bi-$\Gamma$-sets allows for an elegant description of tensor products of their associated modules.
\begin{proposition}\label{GodOfTheSun} Let $S\subset C_{G}(\Gamma)$ be a subgroup and $A,B$ and $C$ be $S$-$C^{*}$-algebras. Let $V,W$ be $S$-anchored bi-$\Gamma$-sets, $X$ an $(A,B)$-$S$-bimodule, $Y$ a $(B,C)$-$S$-bimodule. Then the map
\begin{align*}\alpha: C_{c}(V,X)\otimes_{C_{c}(\Gamma,B)}^{\alg}C_{c}(W,Y)&\rightarrow C_{c}(V\times_{\Gamma}W,X\otimes_{B}Y),
\end{align*}
given by
\[\alpha(\Phi\otimes\Psi)[v,w]:=\sum_{\gamma}\Phi(v\gamma)\otimes m(v)\gamma\Psi(\gamma^{-1}w),\]
is an inner product preserving map of $(C_{c}(\Gamma, A),C_{c}(\Gamma,C))$-bimodules with dense range. Consequently their respective $C^{*}$-module completions are unitarily isomorphic $(A\times_{r}\Gamma,C\rtimes_{r}\Gamma)$-bimodules.
\end{proposition}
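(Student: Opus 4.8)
The plan is to verify, in order, the following three things: (i) $\alpha$ is well-defined, i.e.\ independent of the chosen representative $(v,w)$ of the class $[v,w]$ and compatible with the balancing over $C_c(\Gamma,B)$; (ii) $\alpha$ is a bimodule map for the left $C_c(\Gamma,A)$- and right $C_c(\Gamma,C)$-actions; (iii) $\alpha$ preserves the $C_c(\Gamma,C)$-valued inner products and has dense range. Once these are in place, the passage to $C^*$-completions is automatic: an inner-product preserving bimodule map with dense range extends uniquely to a unitary of Hilbert $C^*$-modules, and the left actions extend by continuity, yielding the asserted unitary isomorphism of $(A\rtimes_r\Gamma, C\rtimes_r\Gamma)$-bimodules.

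For (i), I would first check that replacing $(v,w)$ by $(v\gamma_0, \gamma_0^{-1}w)$ leaves $\alpha(\Phi\otimes\Psi)[v,w]$ unchanged. Substituting and re-indexing the sum $\gamma\mapsto\gamma_0\gamma$, the factor $m(v\gamma_0)=m(v)\gamma_0$ (using the anchor property) combines with $\gamma_0\gamma$ to reproduce $m(v)\gamma$ acting on $\Psi$, while $\Phi(v\gamma_0\cdot\gamma_0^{-1}\gamma')$ matches up after the same reindexing; the balancing over $C_c(\Gamma,B)$ is checked by moving an element $f\in C_c(\Gamma,B)$ from the $X$-slot to the $Y$-slot inside the sum, which amounts to the identity $\Phi*f(v\gamma)\otimes m(v)\gamma\Psi(\gamma^{-1}w)=\Phi(v\gamma)\otimes m(v)\gamma(f*\Psi)(\gamma^{-1}w)$ after using $m(v\gamma)=m(v)\gamma$ and reindexing. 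Finiteness of all sums is guaranteed because $\Phi,\Psi$ have finite support and $V,W$ are discrete.

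For (ii) and (iii), these are direct computations in the spirit of the proof of Lemma \ref{cosetHecke}. The left $C_c(\Gamma,A)$-module identity $\alpha((f*\Phi)\otimes\Psi)=f*\alpha(\Phi\otimes\Psi)$ follows by expanding $f*\Phi(v\gamma)=\sum_\eta f(\eta)\eta\Phi(\eta^{-1}v\gamma)$ and reindexing; the right $C_c(\Gamma,C)$-action is handled symmetrically, being careful that the product anchor $[v,w]\mapsto m(v)m(w)$ on $V\times_\Gamma W$ is what appears in the right action and inner product on $C_c(V\times_\Gamma W, X\otimes_B Y)$, and that it is produced correctly from $m(v)$ and the anchor of $Y$. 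For the inner product, one expands $\langle\alpha(\Phi_1\otimes\Psi_1),\alpha(\Phi_2\otimes\Psi_2)\rangle(\delta)$ as a triple sum over $[v,w]\in V\times_\Gamma W$ and over two $\Gamma$-summation indices; choosing a set of representatives for $V\times_\Gamma W$ and using the anchor relations to telescope one of the $\Gamma$-sums, one recovers $\langle\Phi_1\otimes\Psi_1,\Phi_2\otimes\Psi_2\rangle(\delta)$ computed via the interior tensor product, i.e.\ $\langle\Phi_1,\langle\Psi_1,\Psi_2\rangle\cdot\Phi_2\rangle$ unwound with the formula for the $C_c(\Gamma,B)$-valued inner product on $C_c(V,X)$. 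Dense range is immediate: elements $\delta_{[v,w]}\otimes(x\otimes y)$ with $x\in X, y\in Y$ are in the image (take $\Phi,\Psi$ supported at single points), and these span a dense subspace.

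The main obstacle I anticipate is purely bookkeeping rather than conceptual: keeping the anchor factors $m(v)$, $m(w)$, $m(v)m(w)$ in exactly the right places through each reindexing, since the anchor twists the right actions and inner products and does not cancel as cleanly as in the untwisted case. In particular the inner-product computation in (iii) requires choosing coset representatives for $V\times_\Gamma W$ and carefully matching the stabiliser contributions, exactly as in the passage from Equation \eqref{cosetrepinnerprod} through Lemma \ref{cosetHecke}; I would organise it so that one $\Gamma$-summation is absorbed into the choice of representative and the other survives as the free variable $\delta$. Everything else is a routine extension-by-continuity argument.
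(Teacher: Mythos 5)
Your proposal is correct and follows essentially the same route as the paper's proof: a direct verification that $\alpha$ preserves inner products (by collapsing the double sum over $[v,w]\in V\times_{\Gamma}W$ and $\gamma\in\Gamma$ into a sum over $V\times W$, which is legitimate since the anchor forces the $\Gamma$-actions to be free), is a bimodule map, and has dense range via point-supported sections (where, as you anticipate in your bookkeeping caveat, the generator in the $W$-slot must carry a compensating factor $m(v)^{-1}$), followed by extension to completions. The only difference is that you additionally spell out the well-definedness over the balanced tensor product and the choice of representative, which the paper leaves implicit.
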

\begin{proof} The following calculation shows that $\alpha$ is unitary:
\begin{align*}\langle \alpha & (\Phi\otimes\Psi),\alpha(\Phi\otimes\Psi)\rangle(\delta)=\sum_{[v,w]} m(w)^{-1}m(v)^{-1}\langle\alpha(\Phi\otimes\Psi)(v,w),\alpha(\Phi\otimes\Psi)(v,w\delta) \rangle\\
&=\sum_{[v,w]} \sum_{\gamma,\varepsilon}m(w)^{-1}m(v)^{-1}\langle m(v)\gamma\Psi(\gamma^{-1}w),\langle \Phi(v\gamma),\Phi(v\varepsilon)\rangle m(v)\varepsilon\Psi(\varepsilon^{-1}w\delta) \rangle\\
&=\sum_{[v,w]} \sum_{\gamma,\varepsilon}m(w)^{-1}\langle \gamma\Psi(\gamma^{-1}w),m(v)^{-1}(\langle \Phi(v\gamma),\Phi(v\varepsilon)\rangle) \varepsilon\Psi(\varepsilon^{-1}w\delta) \rangle\\
&=\sum_{[v,w]} \sum_{\gamma,\varepsilon}m(\gamma^{-1}w)^{-1}\langle \Psi(\gamma^{-1}w),m(v\gamma)^{-1}(\langle \Phi(v\gamma),\Phi(v\varepsilon)\rangle) \gamma^{-1}\varepsilon\Psi(\varepsilon^{-1}w\delta) \rangle\\
&=\sum_{[v,w]} \sum_{\gamma,\varepsilon}m(\gamma^{-1}w)^{-1}\langle \Psi(\gamma^{-1}w),m(v\gamma)^{-1}(\langle \Phi(v\gamma),\Phi(v\gamma\varepsilon)\rangle) \varepsilon\Psi(\varepsilon^{-1}\gamma^{-1}w\delta) \rangle,\end{align*}
and by virtue of the equivalence relation on $V\times W$ we can replace the sum over equivalence classes $[v,w]\in V\times_{\Gamma}W$ and elements $\gamma\in\Gamma$ by a sum over $(v,w)\in V\times W$, and continue the calculation:
\begin{align*}
&=\sum_{v\in V}\sum_{w\in W} \sum_{\varepsilon}m(w)^{-1}\langle \Psi(w),m(v)^{-1}(\langle \Phi(v),\Phi(v\varepsilon)\rangle) \varepsilon\Psi(\varepsilon^{-1} w\delta) \rangle\\
&=\sum_{w} \sum_{\varepsilon}m(w)^{-1}\langle \Psi(w),\langle \Phi,\Phi\rangle (\varepsilon) \varepsilon\Psi(\varepsilon^{-1} w\delta) \rangle\\
&=\sum_{w} m(w)^{-1}\langle \Psi(w),\langle \Phi,\Phi\rangle *\Psi(w\delta) \rangle\\
&=\langle \Psi,\langle\Phi,\Phi\rangle\Psi\rangle (\delta).
\end{align*}
It is straightforward to establish that $\alpha$ is a bimodule map:
\begin{align*}\alpha(f*\Phi\otimes \Psi)[v,w]&=\sum_{\gamma}(f*\Phi)(v\gamma)\otimes m(v)\gamma\Psi(\gamma^{-1}w)\\ 
&=\sum_{\gamma,\varepsilon}f(\varepsilon)\varepsilon\Phi(\varepsilon^{-1}v\gamma)\otimes m(v)\gamma\Psi(\gamma^{-1}w)\\ &=\sum_{\varepsilon}f(\varepsilon)\varepsilon\alpha(\Phi\otimes \Psi)[\varepsilon^{-1}v,w]=f*\alpha(\Phi\otimes \Psi)[v,w]
\end{align*}
\begin{align*}
\alpha(\Phi\otimes\Psi*f)[v,w]&=\sum_{\gamma}\Phi(v\gamma)\otimes m(v)\gamma(\Psi*f)(\gamma^{-1}w)\\
&=\sum_{\gamma,\varepsilon}\Phi(v\gamma)\otimes m(v)\gamma(\Psi(\gamma^{-1}w\varepsilon)m(\gamma^{-1}w\varepsilon)f(\varepsilon^{-1}))\\
&=\sum_{\gamma,\varepsilon}\Phi(v\gamma)\otimes m(v)\gamma\Psi(\gamma^{-1}w\varepsilon)m(w\varepsilon)f(\varepsilon^{-1})\\
&=\sum_{\varepsilon}\alpha(\Phi\otimes\Psi)[v,w\varepsilon]m(w\varepsilon)f(\varepsilon^{-1})
\\ &= \alpha(\Phi\otimes\Psi)*f[v,w].
\end{align*}
Lastly, to see that $\alpha$ has dense range, denote by $\delta_{v}:V\to \C$ the indicator function at the element $v\in V$. The functions
\[\chi^{[v,w]}_{x\otimes y}(v',w'):=\delta_{v}(v')\delta_{w}(w')x\otimes y,\]
with $v\in V$, $w\in W$, $x\in X$ and $y\in Y$ span a dense right $C_{c}(\Gamma,C)$-submodule. Now set
\[e_{x}^{v}(v'):=\delta_{v}(v')x,\quad f^{(v,w)}_{y}(w'):=\delta_{w}(w') m(v)^{-1}(y).\]
Then it is easily verified that  $\alpha(e^{v}_{i}\otimes f^{(v,w)}_{y})=\chi^{[v,w]}_{x\otimes y}$, so $\alpha$ has dense range.
This proves the proposition.
\end{proof}
\begin{theorem}\label{Heckemod} For any $g,h\in C_{G}(\Gamma)$ there is a unitary isomorphism of bimodules
$$T^{\Gamma}_{g}\otimes_{B\rtimes_{r}\Gamma} T_{h}^{\Gamma}\xrightarrow{\sim}\bigoplus_{k=1}^{K}\left(T_{(g_{i(k)}h_{j(k)})^{-1}}^{\Gamma}\right)^{\oplus m_{k}}.$$
Consequently, for any 
 $S$-$C^{*}$-algebra $B$, the map $T:[\Gamma g^{-1}\Gamma]\mapsto [T_{g}^{\Gamma}]$ extends to a ring homomorphism
\[T:\mathbf{Z}[\Gamma, S]\to KK_{0}(B\rtimes_{r}\Gamma, B\rtimes_{r}\Gamma). \]

\end{theorem}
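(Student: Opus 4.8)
The plan is to build the proof of Theorem~\ref{Heckemod} in two stages: first the bimodule isomorphism, and then the deduction that $T$ is a ring homomorphism. For the bimodule isomorphism, I would assemble the pieces already in place. By Lemma~\ref{cosetHecke}, each $T^\Gamma_g$ is (up to unitary isomorphism of $B\rtimes_r\Gamma$-bimodules) the completion of $C_c(\Gamma g^{-1}\Gamma, B)$ equipped with the inner product \eqref{cosetrepinnerprod} and the convolution bimodule structure, with anchor the set inclusion $\Gamma g^{-1}\Gamma\hookrightarrow S$; likewise for $T^\Gamma_h$. Applying Proposition~\ref{GodOfTheSun} with $V=\Gamma g^{-1}\Gamma$, $W=\Gamma h^{-1}\Gamma$, $X=Y=B$ (as an $S$-$C^*$-bimodule over itself), and using that $B\otimes_B B\cong B$ $S$-equivariantly, gives a unitary isomorphism of $B\rtimes_r\Gamma$-bimodules
\[
T^\Gamma_g\otimes_{B\rtimes_r\Gamma} T^\Gamma_h \;\cong\; C_c\bigl(\Gamma g^{-1}\Gamma\times_\Gamma \Gamma h^{-1}\Gamma,\; B\bigr)^{-},
\]
the bar denoting $C^*$-completion, with the product anchor. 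Then I invoke Lemma~\ref{bieq}: the map $\omega$ is a bijection of $S$-anchored bi-$\Gamma$-sets
\[
\bigsqcup_{k=1}^{K}\bigsqcup_{\ell=1}^{m_k}\Gamma z_{(k,\ell)}\Gamma \;\xrightarrow{\ \sim\ }\; \Gamma g^{-1}\Gamma\times_\Gamma \Gamma h^{-1}\Gamma,
\]
and note that $\Gamma z_{(k,\ell)}\Gamma=\Gamma g_{i(k)}h_{j(k)}\Gamma$ for each of the $m_k$ values of $\ell$. By the remark following Proposition~\ref{GodOfTheSun}, precomposition with $\omega$ (and the identity on $B$) induces a unitary bimodule isomorphism
\[
C_c\bigl(\Gamma g^{-1}\Gamma\times_\Gamma \Gamma h^{-1}\Gamma, B\bigr)^{-}\;\cong\; \bigoplus_{k=1}^{K} C_c\bigl(\Gamma (g_{i(k)}h_{j(k)})^{-1}\Gamma, B\bigr)^{-\,\oplus m_k},
\]
since $C_c$ of a disjoint union of anchored bi-$\Gamma$-sets is the direct sum of the corresponding bimodules (the inner products and module actions are blockwise, the anchor restricting to the inclusion on each block). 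Applying Lemma~\ref{cosetHecke} in reverse on each summand identifies $C_c(\Gamma (g_{i(k)}h_{j(k)})^{-1}\Gamma, B)^{-}\cong T^\Gamma_{g_{i(k)}h_{j(k)}}$, yielding the displayed isomorphism.

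For the ring homomorphism statement, I would pass to $KK$-classes. The Kasparov product $KK_0(B\rtimes_r\Gamma,B\rtimes_r\Gamma)\otimes KK_0(B\rtimes_r\Gamma,B\rtimes_r\Gamma)\to KK_0(B\rtimes_r\Gamma,B\rtimes_r\Gamma)$, for classes represented by bimodules with zero operator, is computed by the interior tensor product of bimodules; hence the bimodule isomorphism above gives
\[
[T^\Gamma_g]\otimes_{B\rtimes_r\Gamma}[T^\Gamma_h]\;=\;\sum_{k=1}^{K} m_k\,[T^\Gamma_{g_{i(k)}h_{j(k)}}]\;\in\; KK_0(B\rtimes_r\Gamma,B\rtimes_r\Gamma).
\]
Comparing with the defining relation \eqref{Shimproddef} for the Shimura product and \eqref{biactiondecomp}, the right-hand side is exactly $T\bigl([\Gamma g^{-1}\Gamma]\cdot[\Gamma h^{-1}\Gamma]\bigr)$: the double coset $\Gamma(g_{i(k)}h_{j(k)})^{-1}\Gamma$ equals $\Gamma (g_{i(k)}h_{j(k)})^{-1}\Gamma$ and, up to the bookkeeping, the $z_k$'s are the representatives of $\Gamma g^{-1}\Gamma h^{-1}\Gamma=\bigsqcup_k \Gamma g_{i(k)}h_{j(k)}\Gamma$ with multiplicities $m_k$. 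Since $T$ is defined on a free abelian group by its values on the basis of double cosets and is additive by construction, this multiplicativity on basis elements upgrades to a ring homomorphism; one also checks $T$ sends $[\Gamma\cdot\Gamma]=[\Gamma]$ to $[T^\Gamma_1]=[B\rtimes_r\Gamma]$, the identity of the Kasparov ring.

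The step I expect to be the main obstacle is matching conventions precisely enough that the combinatorial data $(m_k, i(k), j(k))$ appearing in Lemma~\ref{bieq} coincides with the data appearing in the Shimura product \eqref{Shimproddef}–\eqref{biactiondecomp}, including the inversions $g\mapsto g^{-1}$ deliberately built into the definition of $T^\Gamma_g$. In particular I would need to verify carefully that $\Gamma g^{-1}\Gamma h^{-1}\Gamma=\bigsqcup_k\Gamma (g_{i(k)}h_{j(k)})^{-1}\Gamma$ is the disjoint decomposition dual to $\Gamma g^{-1}\Gamma h^{-1}\Gamma = \bigsqcup_k \Gamma g_{i(k)}h_{j(k)}\Gamma$, and that the multiplicities $m_k = \#\{(i,j): g_i h_j\Gamma = g_{i(k)}h_{j(k)}\Gamma\}$ are reproduced by the fibre cardinalities of $\omega$; the counting identity $\sum_k m_k d_k = |I||J|$ from \cite[Proposition 3.2]{Shimura}, already used in the proof of Lemma~\ref{bieq}, is what makes the bookkeeping close up. Everything else — that interior tensor products compute Kasparov products of bimodule classes, that $C_c$ of a disjoint union splits as a direct sum, and that a map out of a free abelian group that is multiplicative and unital on a basis is a ring homomorphism — is standard and can be invoked rather than belabored.
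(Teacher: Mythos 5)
Your proposal follows the paper's proof essentially verbatim: Lemma \ref{cosetHecke} to realise $T^{\Gamma}_{g}$ as the completion of $C_{c}(\Gamma g^{-1}\Gamma,B)$, Proposition \ref{GodOfTheSun} to identify the interior tensor product with $C_{c}$ of the $\Gamma$-product, Lemma \ref{bieq} to decompose that anchored bi-$\Gamma$-set into double cosets, and additivity of $KK$-classes to conclude multiplicativity of $T$. The only blemish is the inversion bookkeeping you yourself flag: the disjoint union in Lemma \ref{bieq} consists of the sets $\Gamma g_{i(k)}h_{j(k)}\Gamma$ (not their inverses), whose completions are $T^{\Gamma}_{(g_{i(k)}h_{j(k)})^{-1}}$, so your final displayed identity should read $[T^{\Gamma}_{g}]\otimes[T^{\Gamma}_{h}]=\sum_{k}m_{k}\,[T^{\Gamma}_{(g_{i(k)}h_{j(k)})^{-1}}]$, matching the theorem statement.
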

\begin{proof} 
By Lemma \ref{cosetHecke}, the modules $T^{\Gamma}_{g}$ and $T^{\Gamma}_{h}$ are unitarily isomorphic to those associated to the anchored bi-$\Gamma$-sets $\Gamma g^{-1}\Gamma$ and $\Gamma h^{-1}\Gamma$. By Proposition \ref{GodOfTheSun}, their tensor product is given by
\[C_{c}(\Gamma g^{-1}\Gamma, B)\otimes^{\alg}_{C_{c}(\Gamma, B)}C_{c}(\Gamma h^{-1}\Gamma ,B)\xrightarrow{\sim} C_{c}(\Gamma g^{-1}\Gamma\times_{\Gamma}\Gamma h^{-1}\Gamma, B\otimes_{B}B).\]
Since $B\otimes_{B}B\simeq B$ as $S$-modules and by Lemma \ref{bieq} there is an isomorphism of anchored bi-$\Gamma$-sets
\[\Gamma g^{-1}\Gamma\times_{\Gamma}\Gamma h^{-1}\Gamma \simeq \bigsqcup_{k=1}^{K}\bigsqcup_{\ell=1}^{m_{k}}\Gamma z_{(k,\ell)}\Gamma.\]
Taking completions we obtain the unitary bimodule isomorphism
$$T^{\Gamma}_{g}\otimes_{B\rtimes_{r}\Gamma} T_{h}^{\Gamma}\xrightarrow{\sim}\bigoplus_{k=1}^{K}\bigoplus_{\ell=1}^{m_{k}}\left(T_{z_{(k,\ell)}^{-1}}^{\Gamma}\right).$$
The definition of addition in $KK$-theory then yields
\begin{align*}T[\Gamma g^{-1}\Gamma]\otimes T[\Gamma h^{-1} \Gamma]&=[T^{\Gamma}_{g}]\otimes[T_{h}^{\Gamma}]=\sum_{k=1}^{K}\sum_{\ell=1}^{m_{k}}[T_{z_{(k,\ell)}^{-1}}^{\Gamma}]=\sum_{k=1}^{K}\sum_{\ell=1}^{m_{k}}T[\Gamma z_{(k,\ell)}\Gamma]\\
&=\sum_{k=1}^{K}m_{k}T[\Gamma z_{k}\Gamma]=T\left(\sum_{k=1}^{K}m_{k}[\Gamma z_{k}\Gamma]\right)=T([\Gamma g^{-1}\Gamma]\cdot [\Gamma h^{-1} \Gamma]),\end{align*}
showing that $[\Gamma g^{-1}\Gamma]\mapsto [T^{\Gamma}_{g}]$ is a ring homomorphism.\end{proof}
We define $\mathcal{H}_{B}(\Gamma, S)$ to be the subring of $KK_{0}(B\rtimes_{r}\Gamma, B\rtimes_{r}\Gamma)$ generated by $T_{g}^{\Gamma}$ for $g\in C_{G}(\Gamma)$. We obviously have
\begin{corollary} If $\mathbf{Z}[\Gamma,S]$ is commutative, then $\mathcal{H}_{\Gamma}(B, S)$ is commutative.
\end{corollary}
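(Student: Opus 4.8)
The plan is to derive this as a purely formal consequence of Theorem \ref{Heckemod}. The first step is to observe that the subring $\mathcal{H}_{B}(\Gamma,S)\subset KK_{0}(B\rtimes_{r}\Gamma, B\rtimes_{r}\Gamma)$ coincides with the image of the ring homomorphism $T:\mathbf{Z}[\Gamma,S]\to KK_{0}(B\rtimes_{r}\Gamma, B\rtimes_{r}\Gamma)$ furnished by Theorem \ref{Heckemod}. Indeed, $\im T$ is a subring (because $T$ is a ring homomorphism) and it contains each class $[T_{g}^{\Gamma}]=T[\Gamma g^{-1}\Gamma]$, so $\mathcal{H}_{B}(\Gamma,S)\subset\im T$; conversely every algebra generator of $\im T$ is one of the $[T_{g}^{\Gamma}]$, which lies in $\mathcal{H}_{B}(\Gamma,S)$, and the latter is a subring, whence $\im T\subset\mathcal{H}_{B}(\Gamma,S)$. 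Thus $\mathcal{H}_{B}(\Gamma,S)=\im T$.

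Granting this identification, the corollary is just the elementary fact that a homomorphic image of a commutative ring is commutative: given $a,b\in\mathcal{H}_{B}(\Gamma,S)$ I would write $a=T(x)$, $b=T(y)$ with $x,y\in\mathbf{Z}[\Gamma,S]$ and conclude $ab=T(xy)=T(yx)=ba$. Equivalently, and perhaps more transparently for the reader, one can argue at the level of generators: applying $T$ to the identity $[\Gamma g^{-1}\Gamma]\cdot[\Gamma h^{-1}\Gamma]=[\Gamma h^{-1}\Gamma]\cdot[\Gamma g^{-1}\Gamma]$, which holds in $\mathbf{Z}[\Gamma,S]$ by hypothesis, yields $[T_{g}^{\Gamma}]\otimes[T_{h}^{\Gamma}]=[T_{h}^{\Gamma}]\otimes[T_{g}^{\Gamma}]$ in $KK_{0}(B\rtimes_{r}\Gamma, B\rtimes_{r}\Gamma)$; since a subring generated by a family of pairwise commuting elements is commutative, $\mathcal{H}_{B}(\Gamma,S)$ is commutative.

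I do not anticipate any genuine obstacle: the entire analytic content has already been packaged into Theorem \ref{Heckemod}, and what remains is bookkeeping. The only point deserving a sentence of care is the identification $\mathcal{H}_{B}(\Gamma,S)=\im T$, and this is formal once one notes that $T$ respects products. It is worth recording, as a byproduct of the same reasoning, that $\mathcal{H}_{B}(\Gamma,S)$ is a ring-theoretic quotient of $\mathbf{Z}[\Gamma,S]$, so that any property of the double-coset Hecke ring that is inherited by quotients (commutativity being the case at hand) automatically descends to $\mathcal{H}_{B}(\Gamma,S)$.
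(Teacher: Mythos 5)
Your argument is correct and is precisely the reasoning the paper intends: the corollary is stated there without proof (``We obviously have''), as the formal consequence of Theorem \ref{Heckemod} that the image of a ring homomorphism from a commutative ring is commutative, which is exactly your identification $\mathcal{H}_{B}(\Gamma,S)=\im T$ followed by $ab=T(xy)=T(yx)=ba$. The only caveat is notational: the paper's definition reads ``generated by $T_{g}^{\Gamma}$ for $g\in C_{G}(\Gamma)$,'' but since the object is written $\mathcal{H}_{B}(\Gamma,S)$ the intended generating set is $\{T_{g}^{\Gamma}: g\in S\}$, which is the reading your identification with $\im T$ requires.
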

Similarly write $\mathcal{H}_{M}(S)$ for the subring of $KK_{0}(C_{0}(M),C_{0}(M))$ generated by the classes of the correspondences $M\xleftarrow{s} M_{g}\xrightarrow{t} M$ with $g\in S$. 
\begin{corollary}\label{corrcor} Let $X$ be an $S$-space on which $\Gamma$ acts freely and properly with quotient $M:=X/\Gamma$. The map $[\Gamma g^{-1}\Gamma]\mapsto [M\xleftarrow{t} M_{g}\xrightarrow{s}M]$ defines a ring homomorphism $$\mathbf{Z}[\Gamma,S]\to KK_{0}(C_{0}(M), C_{0}(M)).$$ In particular, the double-coset product $[\Gamma g^{-1} \Gamma]\cdot [\Gamma h^{-1}\Gamma]$ corresponds to the class of the composition of correspondences $[M\xleftarrow{s_{g}} M_{g}\, { }_{t_{g}}\times_{s_{h}} M_{h}\xrightarrow{t_{h}}M]$ and there is an isomorphism $\mathcal{H}_{M}(S)\simeq \mathcal{H}_{\Gamma}(C_{0}(X), S)$.
\end{corollary}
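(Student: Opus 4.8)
The plan is to deduce the statement from Theorem~\ref{Heckemod} by transporting it along the Morita equivalence between $C_{0}(X)\rtimes_{r}\Gamma$ and $C_{0}(M)$ recalled at the beginning of Section~\ref{analytic-hecke}. Fix a $(C_{0}(X)\rtimes_{r}\Gamma,\,C_{0}(M))$-imprimitivity bimodule and let $[\mathcal{E}]\in KK_{0}(C_{0}(X)\rtimes_{r}\Gamma,\,C_{0}(M))$ be its class; it is invertible with inverse $[\mathcal{E}]^{-1}=[\mathcal{E}^{*}]$. Since the Kasparov product is associative and $[\mathcal{E}]^{-1}\otimes[\mathcal{E}]=1$, the conjugation map
\[\Phi\colon KK_{0}(C_{0}(X)\rtimes_{r}\Gamma,\,C_{0}(X)\rtimes_{r}\Gamma)\xrightarrow{\ \sim\ }KK_{0}(C_{0}(M),C_{0}(M)),\qquad z\longmapsto [\mathcal{E}]^{-1}\otimes z\otimes[\mathcal{E}],\]
is a ring isomorphism. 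By the discussion in Section~\ref{analytic-hecke} (see also \cite{mesland-sengun}), for $B=C_{0}(X)$ the Morita equivalence identifies $[T^{\Gamma}_{g}]$ with $[T^{M}_{g}]$, the class of the Hecke correspondence $M\xleftarrow{s}M_{g}\xrightarrow{t}M$. Hence, precomposing $\Phi$ with the ring homomorphism $T\colon\mathbf{Z}[\Gamma,S]\to KK_{0}(C_{0}(X)\rtimes_{r}\Gamma,\,C_{0}(X)\rtimes_{r}\Gamma)$ of Theorem~\ref{Heckemod} (with $B=C_{0}(X)$, an $S$-$C^{*}$-algebra since $S$ acts on $X$) yields a ring homomorphism $\mathbf{Z}[\Gamma,S]\to KK_{0}(C_{0}(M),C_{0}(M))$ carrying $[\Gamma g^{-1}\Gamma]$ to $[T^{M}_{g}]=[M\leftarrow M_{g}\rightarrow M]$. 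This gives the first assertion.

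For the product formula I would combine two inputs. First, the Kasparov product over $C_{0}(M)$ of the classes of two Hecke correspondences is the class of their geometric composite: because all structural maps are finite coverings, $t_{g}\colon M_{g}\to M$ and $s_{h}\colon M_{h}\to M$ are automatically transverse, the fibre product $M_{g}\,{}_{t_{g}}\times_{s_{h}}M_{h}$ is again a manifold finitely covering $M$ on both sides, and the base-change formula for wrong-way cycles identifies $[t_{g}!]\otimes_{C_{0}(M)}[s_{h}^{*}]$ with $[p_{g}^{*}]\otimes[p_{h}!]$ along the two projections; associativity of the Kasparov product then gives $[T^{M}_{g}]\otimes_{C_{0}(M)}[T^{M}_{h}]=[M\xleftarrow{s_{g}}M_{g}\,{}_{t_{g}}\times_{s_{h}}M_{h}\xrightarrow{t_{h}}M]$, the composition of correspondences being clean (the structural maps are local diffeomorphisms, so there is no correction term), cf.\ \cite[Prop.\ 2.9]{connes-skandalis}. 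Second, applying $\Phi$ to the bimodule decomposition of Theorem~\ref{Heckemod} identifies this same product with $\sum_{k=1}^{K}m_{k}[T^{M}_{z_{k}}]$; equivalently and more geometrically, Lemma~\ref{bieq}, read through the identifications $M_{g}=X/\Gamma_{g}$ and so on, shows that $M_{g}\,{}_{t_{g}}\times_{s_{h}}M_{h}$ itself splits as the disjoint union $\bigsqcup_{k=1}^{K}\bigsqcup_{\ell=1}^{m_{k}}M_{z_{(k,\ell)}}$. Combining the two yields the displayed formula. Finally, since $\Phi$ is a ring isomorphism taking the generating classes $[T^{\Gamma}_{g}]$ bijectively onto the classes $[T^{M}_{g}]=[M\leftarrow M_{g}\rightarrow M]$, it restricts to an isomorphism $\mathcal{H}_{M}(S)\simeq\mathcal{H}_{\Gamma}(C_{0}(X),S)$.

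Most of this is a formal transport of structure; the step I expect to require the most care is the matching asserted in the ``in particular'' clause. Formally the product formula is already a transport of Theorem~\ref{Heckemod} through the ring isomorphism $\Phi$; what needs checking is that this transported identity is precisely the Connes--Skandalis composition law for correspondences together with the fibre-product decomposition of Lemma~\ref{bieq}, i.e.\ that the combinatorial double-coset picture on $\mathbf{Z}[\Gamma,S]$ and the geometric covering-space picture on $M$ line up on the nose, in particular that the coset representatives used to split $M_{g}\,{}_{t_{g}}\times_{s_{h}}M_{h}$ coincide with those occurring in \eqref{biactiondecomp} and Lemma~\ref{bieq}.
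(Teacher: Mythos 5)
Your proposal is correct and follows essentially the same route as the paper: the paper likewise obtains the ring homomorphism by composing the map $T$ of Theorem~\ref{Heckemod} (with $B=C_{0}(X)$) with the Morita equivalence isomorphism, which sends $[T^{\Gamma}_{g}]$ to the class of the Hecke correspondence (citing \cite[Proposition 3.8]{mesland-sengun}), and handles the ``in particular'' clause by invoking the Connes--Skandalis composition theorem for correspondences (\cite[Theorem 3.2]{connes-skandalis}). Your additional geometric unpacking of the fibre-product decomposition via Lemma~\ref{bieq} is a correct elaboration of what the paper leaves implicit.
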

\begin{proof} By \cite[Proposition 3.8]{mesland-sengun} the Morita equivalence isomorphism $$KK_{0}(C_{0}(X)\rtimes\Gamma, C_{0}(X)\rtimes\Gamma)\to KK_{0}(C_{0}(M), C_{0}(M))$$ maps $T^{\Gamma}_{g}$ to $T^{M}_{g}=[M\xleftarrow{s_{g}} M_{g} \xrightarrow{t_{g}} M]$. Thus the above map is the composition
\[\mathbf{Z}[\Gamma, S]\to KK_{0}(C_{0}(X)\rtimes\Gamma, C_{0}(X)\rtimes\Gamma)\to KK_{0}(C_{0}(M),C_{0}(M)),\]
whence a homomorphism. The last statement follows from \cite[Theorem 3.2]{connes-skandalis}. Clearly $\mathcal{H}_{M}(S)\simeq \mathcal{H}_{\Gamma}(C_{0}(X), S)$ under this isomorphism.
\end{proof}

\begin{remark} Corollary \ref{corrcor} is the $KK$-theoretic analogue of the well-known fact (see \cite[Chapter 7]{Shimura}) that the double-coset Hecke ring can be interpreted in terms of (topological) correspondences where the double-coset multiplication simply becomes composition of correspondences.
\end{remark}

\section{Hecke equivariant exact sequences}\label{exact}
As before, let $S$ be a group such that $\Gamma \subset S \subset C_G(\Gamma)$. In this section we prove the following general result. For $S$-algebras $A$ and $B$, and any element $[x]\in KK_{i}^{S}(A,B)$ we have that
\[[T_{g}^{A\rtimes_{r}\Gamma}]\otimes j_{\Gamma} ([x])=j_{\Gamma}([x])\otimes[T_{g}^{B\rtimes_{r}\Gamma}]\in KK_{i}(A\rtimes_{r}\Gamma, B\rtimes_{r}\Gamma).\]
Here $j_{\Gamma}$ denotes the Kasparov descent map (see \cite{Kas2, kasparovconspectus})
\[j_{\Gamma}: KK^{S}_{*}(A,B)\to KK^{\Gamma}_{*}(A,B)\to KK_{*}(A\rtimes_{r}\Gamma, B\rtimes_{r}\Gamma),\]
and we have written $T^{A\rtimes_{r}\Gamma}_{g}$ for $T^{\Gamma}_{g}$ to emphasize the change of coefficient algebra. This result implies that for any $S$-equivariant semisplit extension 
\[0\to I\to A\to B\to 0,\]
of $C^{*}$-algebras that is $\Gamma$-exact in the sense  that
\[0\to I\rtimes_{r} \Gamma\to A\rtimes_{r}\Gamma \to B\rtimes_{r}\Gamma\to 0,\] is exact, the long exact sequences in both variables of the $KK$-bifunctor are Hecke equivariant. In particular we obtain Hecke equivariant exact sequences in $K$-theory and $K$-homology for various compactifications associated with locally symmetric spaces.
\subsection{The descent theorem}
Kasparov's descent construction associates to a $\Gamma$-equivariant $C^{*}$-$B$-module $X$ a $C^{*}$-module $X\rtimes_{r}\Gamma$ over $B\rtimes_{r}\Gamma$ (see \cite{Kas1,Kas2, kasparovconspectus}). To an $S$-equivariant $C^{*}$-module $X$ and a double coset $\Gamma g^{-1}\Gamma$, with $g\in S$, we associate the $(C_{c}(\Gamma, A),C_{c}(\Gamma, B))$-bimodule
\[C_{c}(\Gamma g^{-1}\Gamma, X)=\mathbf{C}[\Gamma g^{-1}\Gamma]\otimes^{\textnormal{alg}}_{\mathbf{C}}X.\] 
cf. Section \ref{redcross}. We denote the $C^{*}$-module completion so obtained by $T_{g}^{X\rtimes_{r}\Gamma}$. 
The following Lemma is an application of Proposition \ref{GodOfTheSun}.
\begin{lemma}\label{descentlemma} Let $A$ and $B$ be $S$-$C^{*}$-algebras. Suppose that $X$ is an $S$-equivariant right $C^{*}$-module over $B$ and $\pi:A\to\End^{*}_{B}(X)$ an $S$-equivariant essential $*$-homomorphism. For every $g\in S$, there are inner product preserving bimodule homomorphisms
\begin{equation}\label{Xiso}C_{c}(\Gamma g^{-1}\Gamma, A)\otimes_{C_{c}(\Gamma,A)}^{\textnormal{alg}}C_{c}(\Gamma, X)\xrightarrow{\sim} C_{c}(\Gamma g^{-1}\Gamma, X) \xleftarrow{\sim} C_{c}(\Gamma,X)\otimes _{C_{c}(\Gamma,B)}^{\textnormal{alg}}C_{c}(\Gamma g^{-1}\Gamma, B),\end{equation}
of $(C_{c}(\Gamma,A),C_{c}(\Gamma,B))$-bimodules with dense range. Consequently the respective $C^{*}$-module completions are unitarily isomorphic $(A\rtimes_{r}\Gamma, B\rtimes_{r}\Gamma)$-bimodules.
\end{lemma}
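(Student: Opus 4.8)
\textbf{Proof proposal for Lemma \ref{descentlemma}.}

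The plan is to obtain both isomorphisms in \eqref{Xiso} as instances of Proposition \ref{GodOfTheSun}, by viewing the various $C_c(\Gamma,-)$ and $C_c(\Gamma g^{-1}\Gamma,-)$ objects as modules attached to $S$-anchored bi-$\Gamma$-sets. For the right-hand isomorphism, I would take $V=\Gamma$ (with anchor the inclusion $\Gamma\hookrightarrow S$, i.e. the identity map), $W=\Gamma g^{-1}\Gamma$ (anchored by inclusion), the $(A,A)$-$S$-bimodule to be $A$ itself (acting by left and right multiplication), and the $(A,B)$-$S$-bimodule to be $X$ with its given left $A$-action via $\pi$ and right $B$-action. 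Proposition \ref{GodOfTheSun} then gives an inner-product-preserving $(A\rtimes_r\Gamma,B\rtimes_r\Gamma)$-bimodule map with dense range
\[C_{c}(\Gamma,X)\otimes_{C_{c}(\Gamma,B)}^{\alg}C_{c}(\Gamma g^{-1}\Gamma,B)\;\longrightarrow\;C_{c}(\Gamma\times_{\Gamma}\Gamma g^{-1}\Gamma,\;X\otimes_{B}B).\]
Here I would use the evident isomorphism of $S$-anchored bi-$\Gamma$-sets $\Gamma\times_{\Gamma}\Gamma g^{-1}\Gamma\cong \Gamma g^{-1}\Gamma$, sending $[\gamma,\xi]\mapsto\gamma\xi$ (with compatible anchors), together with the $S$-equivariant unitary $X\otimes_{B}B\cong X$; by the naturality remark immediately preceding Proposition \ref{GodOfTheSun} these induce a unitary bimodule isomorphism, and composing gives the right-hand arrow of \eqref{Xiso}. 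For the left-hand isomorphism I would symmetrically take $V=\Gamma g^{-1}\Gamma$ anchored by inclusion, $W=\Gamma$ anchored by inclusion, $X'=A$ as the $(A,A)$-$S$-bimodule in the first slot and $X$ as the $(A,B)$-$S$-bimodule in the second slot, and use $A\otimes_{A}X\cong X$ as $S$-bimodules together with $\Gamma g^{-1}\Gamma\times_{\Gamma}\Gamma\cong\Gamma g^{-1}\Gamma$.

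The two points that require genuine care, rather than bookkeeping, are the following. First, Proposition \ref{GodOfTheSun} as stated concerns $S$-bimodules that are $C^{*}$-algebras or $(B,C)$-bimodules over $C^{*}$-algebras; here $X$ is merely an $S$-equivariant Hilbert $B$-module with an essential $S$-equivariant $*$-homomorphism $\pi:A\to\End^{*}_{B}(X)$. I would check that the proof of Proposition \ref{GodOfTheSun} uses only: the $C_c(\Gamma,B)$-valued inner product on $C_c(\Gamma,X)$ built from $\langle\cdot,\cdot\rangle_{X}$ and the anchor; the left $C_c(\Gamma,A)$-action through $\pi$; and the formal $\Gamma$-algebra of the anchor identities — all of which are available verbatim for a Hilbert module with a left action. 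The essentiality of $\pi$ is exactly what guarantees that $A\otimes_{A}X\cong X$ and $X\otimes_{B}B\cong X$ as $S$-equivariant bimodules (the latter always, the former because $\pi(A)X$ is dense), so no extra hypothesis is needed. Second, I would verify that the compositions of the $\alpha$-maps with the set- and module-identifications reproduce the Kasparov descent module $X\rtimes_r\Gamma$ in the appropriate slots: that is, $C_c(\Gamma,X)$ completed over $C_c(\Gamma,B)$ is by definition (or by a standard identification, cf. \cite{Kas2,kasparovconspectus}) the descended module $X\rtimes_r\Gamma$, and $C_c(\Gamma g^{-1}\Gamma,A)$ completed is $T_g^{A\rtimes_r\Gamma}$ by Lemma \ref{cosetHecke}, likewise for $B$.

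The main obstacle I anticipate is precisely the first point: making sure Proposition \ref{GodOfTheSun} genuinely applies to a Hilbert-module-with-left-action rather than to a bimodule over $C^{*}$-algebras, and in particular that the density-of-range argument (which in the proof of Proposition \ref{GodOfTheSun} produces explicit preimages $e_x^v$ and $f_y^{(v,w)}$ of the spanning elements $\chi^{[v,w]}_{x\otimes y}$) still goes through when one of the ``bimodules'' is $X$; essentiality of $\pi$ is what rescues this, since it lets one write elements of $X$ as $\pi(a)x$ and thereby land inside the algebraic tensor product. Once that is dispatched, everything else is the diagram-chase of composing the two $S$-anchored bi-$\Gamma$-set isomorphisms $\Gamma\times_{\Gamma}\Gamma g^{-1}\Gamma\cong\Gamma g^{-1}\Gamma\cong\Gamma g^{-1}\Gamma\times_{\Gamma}\Gamma$ with the coefficient identifications $X\otimes_B B\cong X\cong A\otimes_A X$, and invoking the functoriality statement after Proposition \ref{GodOfTheSun} to conclude that the resulting maps are unitary bimodule isomorphisms on completions.
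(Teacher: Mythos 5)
Your proposal is correct and follows exactly the route the paper takes: the paper proves Lemma \ref{descentlemma} precisely by invoking Proposition \ref{GodOfTheSun} with the anchored bi-$\Gamma$-sets $\Gamma$ and $\Gamma g^{-1}\Gamma$, the identifications $\Gamma g^{-1}\Gamma\times_{\Gamma}\Gamma\simeq\Gamma g^{-1}\Gamma\simeq\Gamma\times_{\Gamma}\Gamma g^{-1}\Gamma$, and the coefficient isomorphisms $A\otimes_{A}X\simeq X\simeq X\otimes_{B}B$ (essentiality of $\pi$ handling the first). Your worry about applying Proposition \ref{GodOfTheSun} to a Hilbert module is already covered by its hypotheses, which allow general $(A,B)$-$S$-bimodules.
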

From the identifications
\[\Gamma g^{-1}\Gamma\times_{\Gamma}\Gamma\simeq \Gamma g^{-1}\Gamma\simeq \Gamma\times_{\Gamma}\Gamma g^{-1}\Gamma,\]
given by the multiplication maps and the $S$-equivariant isomorphisms
\[X\simeq A\otimes_{A}X\simeq X\otimes_{B}B,\]
coming from the bimodule structure we obtain the explicit from of the isomorphisms in \eqref{Xiso}:
\begin{align*}
\alpha:C_{c}(\Gamma g^{-1}\Gamma, A)\otimes_{C_{c}(\Gamma,A)}^{\textnormal{alg}}C_{c}(\Gamma, X) & \to C_{c}(\Gamma g^{-1}\Gamma, X) \\
\alpha(\Psi\otimes\Phi)(\xi) & :=\sum_{\gamma\in\Gamma}\Psi(\xi\gamma)\cdot \xi\gamma\Phi(\gamma^{-1}) \\
& \\
\beta: C_{c}(\Gamma,X)\otimes _{C_{c}(\Gamma,B)}^{\textnormal{alg}}C_{c}(\Gamma g^{-1}\Gamma, B)&\to C_{c}(\Gamma g^{-1}\Gamma, X) \\
\beta(\Phi\otimes\Psi)(\xi)&:=\sum_{\gamma\in\Gamma}\Phi(\gamma)\cdot \gamma\Psi(\gamma^{-1}\xi).
\end{align*}
As before the elements $g_{i}$ are such that $\Gamma g^{-1}\Gamma=\bigsqcup_{i=1}^{d}g_{i}\Gamma$. We construct from them the following operators. 
\begin{lemma} The operator 
\begin{align*}v_{i}:C_{c}(\Gamma,X)&\to C_{c}(g_{i}\Gamma, X)\subset C_{c}(\Gamma g^{-1} \Gamma, X),\quad
(v_{i}\Phi)(g_{i}\xi):=g_{i}\Phi(\xi),
\end{align*} 
extends to an adjointable isometry $X\rtimes_{r}\Gamma\to T^{X\rtimes\Gamma}_{g}$ with adjoint given by
\[(v_{i})^{*}\Psi(\xi):=g_{i}^{-1}\Psi(g_{i}\xi).\] 
\end{lemma}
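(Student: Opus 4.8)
The plan is to verify directly that $v_i$ is an isometric $C_c(\Gamma,B)$-module map on the level of function spaces and then pass to completions, after which adjointability and the stated formula for $(v_i)^*$ follow from a routine inner-product computation. First I would recall that $T^{X\rtimes_r\Gamma}_g$ is the completion of $C_c(\Gamma g^{-1}\Gamma, X)$ with the inner product $\langle\Phi,\Psi\rangle(\delta)=\sum_{\xi\in\Gamma g^{-1}\Gamma}\xi^{-1}(\langle\Phi(\xi),\Psi(\xi\delta)\rangle)$ and the right $C_c(\Gamma,B)$-action $\Psi*f(\xi)=\sum_\gamma \Psi(\xi\gamma)\,\xi f(\gamma^{-1})$, exactly as in Section~\ref{redcross}, while $X\rtimes_r\Gamma$ is the completion of $C_c(\Gamma,X)$ with $\langle\Phi,\Psi\rangle(\delta)=\sum_{\gamma\in\Gamma}\gamma^{-1}(\langle\Phi(\gamma),\Psi(\gamma\delta)\rangle)$.

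The key computation is that $v_i$ preserves the inner product. Given $\Phi,\Psi\in C_c(\Gamma,X)$, the function $v_i\Phi$ is supported on the single coset $g_i\Gamma$, so
\begin{align*}
\langle v_i\Phi, v_i\Psi\rangle(\delta)
&=\sum_{\xi\in\Gamma g^{-1}\Gamma}\xi^{-1}\big(\langle (v_i\Phi)(\xi),(v_i\Psi)(\xi\delta)\rangle\big)\\
&=\sum_{\gamma\in\Gamma}(g_i\gamma)^{-1}\big(\langle g_i\Phi(\gamma), g_i\Psi(\gamma\delta)\rangle\big)\\
&=\sum_{\gamma\in\Gamma}\gamma^{-1}g_i^{-1}g_i\big(\langle\Phi(\gamma),\Psi(\gamma\delta)\rangle\big)=\langle\Phi,\Psi\rangle(\delta),
\end{align*}
where I used that the $S$-action is by isometries on $X$ and that $g_i\Gamma$ is a transversal of $\Gamma g^{-1}\Gamma$ so that $\xi=g_i\gamma$ runs over that coset exactly once. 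A parallel (shorter) check shows $v_i(\Phi*f)=(v_i\Phi)*f$ for $f\in C_c(\Gamma,B)$: both sides evaluated at $g_i\xi$ give $\sum_\gamma g_i\Phi(\xi\gamma)\,g_i\xi f(\gamma^{-1})$. Hence $v_i$ extends to an isometric right-module map on completions, and since it is isometric it has dense range onto the closed submodule $C_c(g_i\Gamma,X)^-\subset T^{X\rtimes_r\Gamma}_g$; being an isometry between Hilbert $C^*$-modules whose image is an orthogonally complemented submodule (the decomposition $\Gamma g^{-1}\Gamma=\bigsqcup_j g_j\Gamma$ induces an orthogonal direct sum decomposition of $T^{X\rtimes_r\Gamma}_g$), it is adjointable.

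For the adjoint formula, define $w_i\Psi(\xi):=g_i^{-1}\Psi(g_i\xi)$ for $\Psi\in C_c(\Gamma g^{-1}\Gamma,X)$ (a finitely supported $X$-valued function on $\Gamma$) and check directly that $\langle v_i\Phi,\Psi\rangle=\langle\Phi,w_i\Psi\rangle$: the left side is $\sum_{\gamma\in\Gamma}(g_i\gamma)^{-1}\langle g_i\Phi(\gamma),\Psi(g_i\gamma\delta)\rangle=\sum_\gamma\gamma^{-1}\langle\Phi(\gamma),g_i^{-1}\Psi(g_i\gamma\delta)\rangle=\sum_\gamma\gamma^{-1}\langle\Phi(\gamma),(w_i\Psi)(\gamma\delta)\rangle$, which is exactly $\langle\Phi,w_i\Psi\rangle(\delta)$. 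Therefore $(v_i)^*=w_i$ on the dense subspaces and extends to the asserted formula. The only point requiring a little care—and the main thing to get right rather than a genuine obstacle—is the bookkeeping that the family $\{g_i\Gamma\}_{i=1}^d$ partitions $\Gamma g^{-1}\Gamma$ and thus reindexes the sum over $\Gamma g^{-1}\Gamma$ as a sum over $\Gamma$ precisely once; everything else is a direct consequence of the $S$-equivariance of the $X$-valued inner product. (Alternatively, one can read off both claims from Lemma~\ref{descentlemma} by noting that $v_i$ is, under the isomorphism $\beta$, the image of the inclusion $C_c(\Gamma,X)\otimes\delta_{g_i}\hookrightarrow C_c(\Gamma,X)\otimes_{C_c(\Gamma,B)}C_c(\Gamma g^{-1}\Gamma,B)$, but the direct verification above is the cleanest.)
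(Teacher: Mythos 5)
Your proof is correct and follows essentially the same route as the paper, which simply observes that the adjoint formula is verified directly and that $(v_i)^*v_i=1$ while $v_iv_i^*$ is the (bounded) projection onto $C_c(g_i\Gamma,X)$. Your inner-product computations are the expanded version of exactly that verification, with the reindexing $\xi=g_i\gamma$ and the $S$-equivariance of the $X$-valued inner product carrying the argument in both cases.
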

\begin{proof} The formula for the adjoint is easily verified. It follows that $(v_{i})^{*}v_{i}=1$ on $C_{c}(\Gamma, X)$, so $v_{i}$ is isometric. The composition $v_{i}v_{i}^{*}=p_{i}$, the projection onto $C_{c}(g_{i}\Gamma, X)$, which is bounded as well.
\end{proof}

\begin{theorem}\label{KKequiv} Let $(X,D)$ be an $S$-equivariant left-essential unbounded Kasparov module of parity $j$ and let $g\in S$. Then we have an equality
\[j_{\Gamma}([(X,D)])\otimes [T_{g}]=[T_{g}]\otimes j_{\Gamma}([(X,D)])\in KK_{j}(A\rtimes_{r}\Gamma,B\rtimes_{r}\Gamma).\]
\end{theorem}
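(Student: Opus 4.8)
The plan is to compute both Kasparov products explicitly and show that each is represented by the \emph{same} unbounded $(A\rtimes_{r}\Gamma,B\rtimes_{r}\Gamma)$-Kasparov module, namely $(T^{X\rtimes_{r}\Gamma}_{g},\widetilde{D})$, where $\widetilde{D}$ is the operator on the completion of $C_{c}(\Gamma g^{-1}\Gamma,X)$ that acts by $D$ pointwise, $(\widetilde{D}\Psi)(\xi):=D(\Psi(\xi))$. In the statement, $[T_{g}]$ means $[T^{B\rtimes_{r}\Gamma}_{g}]\in KK_{0}(B\rtimes_{r}\Gamma,B\rtimes_{r}\Gamma)$ on the left-hand side and $[T^{A\rtimes_{r}\Gamma}_{g}]\in KK_{0}(A\rtimes_{r}\Gamma,A\rtimes_{r}\Gamma)$ on the right-hand side, and $j_{\Gamma}([(X,D)])$ is represented by the descended unbounded module $(X\rtimes_{r}\Gamma,D\rtimes_{r}\Gamma)$, with $D\rtimes_{r}\Gamma$ acting pointwise (an honest unbounded Kasparov module, since $\Gamma$ is discrete and $D$ is $\Gamma$-equivariant). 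Recall also that $[T^{\Gamma}_{g}]$ over either crossed product is represented by $(T^{\Gamma}_{g},0)$, whose left action is by compact operators.

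First I would check that $(T^{X\rtimes_{r}\Gamma}_{g},\widetilde{D})$ is an unbounded Kasparov module in its own right. Using the adjointable isometries $v_{i}\colon X\rtimes_{r}\Gamma\to T^{X\rtimes_{r}\Gamma}_{g}$ of the Lemma preceding the theorem, together with the $S$-equivariance of $D$ (so $D(g_{i}v)=g_{i}D(v)$), one computes $v_{i}^{*}\widetilde{D}v_{i}=D\rtimes_{r}\Gamma$, and $\widetilde{D}$ commutes with the projections $v_{i}v_{i}^{*}$; since these sum to $1$, the unitary $\bigoplus_{i}v_{i}$ conjugates $(D\rtimes_{r}\Gamma)^{\oplus d}$ onto $\widetilde{D}$. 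Hence $\widetilde{D}$ is self-adjoint and regular, and its resolvent is locally compact for the (left-essential) action of $A\rtimes_{r}\Gamma$; boundedness of the commutators is inherited from $(X,D)$ on the dense submodule $C_{c}(\Gamma g^{-1}\Gamma,\mathcal{A})$, with $\mathcal{A}\subset A$ a smooth subalgebra.

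Next I would identify the two products by Kucerovsky's criterion. For $[T^{A\rtimes_{r}\Gamma}_{g}]\otimes j_{\Gamma}([(X,D)])$: the isomorphism $\alpha$ of Lemma \ref{descentlemma} identifies the product module with $T^{X\rtimes_{r}\Gamma}_{g}$, and a short calculation with $\alpha(\Psi\otimes\Phi)(\xi)=\sum_{\gamma}\Psi(\xi\gamma)\,\xi\gamma\Phi(\gamma^{-1})$, using the $S$-equivariance of $D$, shows that the graded commutator of $\widetilde{D}$ with $T_{\Psi}$ equals $\sum_{\gamma}[D,\pi(\Psi(\xi\gamma))](\xi\gamma\Phi(\gamma^{-1}))$, a finite sum of bounded operators when $\Psi\in C_{c}(\Gamma g^{-1}\Gamma,\mathcal{A})$; thus $\widetilde{D}$ is a $(D\rtimes_{r}\Gamma)$-connection, and since the operator of the first factor is $0$ the positivity condition is vacuous, so $(T^{X\rtimes_{r}\Gamma}_{g},\widetilde{D})$ represents this product. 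For $j_{\Gamma}([(X,D)])\otimes[T^{B\rtimes_{r}\Gamma}_{g}]$: the isomorphism $\beta$ of Lemma \ref{descentlemma} identifies the product module with $T^{X\rtimes_{r}\Gamma}_{g}$, and since $D$ is right $B$-linear one has $\widetilde{D}\circ\beta(\Phi\otimes\Psi)=\beta((D\rtimes_{r}\Gamma)\Phi\otimes\Psi)$, so under $\beta$ the operator $\widetilde{D}$ corresponds to $(D\rtimes_{r}\Gamma)\otimes 1$; this is a $0$-connection for $T^{B\rtimes_{r}\Gamma}_{g}$ (on the range of each $T_{e}$ it equals $T_{(D\rtimes_{r}\Gamma)e}$), and being of the form $(D\rtimes_{r}\Gamma)\otimes 1$ it satisfies the positivity condition trivially, so $(T^{X\rtimes_{r}\Gamma}_{g},\widetilde{D})$ represents this product too. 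Comparing, both Kasparov products equal $[(T^{X\rtimes_{r}\Gamma}_{g},\widetilde{D})]$, which is the assertion.

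The step that requires care is the second paragraph, namely verifying that $\widetilde{D}$ genuinely defines an unbounded Kasparov module (self-adjointness, regularity, and local compactness of the resolvent); the realization $\widetilde{D}\cong(D\rtimes_{r}\Gamma)^{\oplus d}$ through the isometries $v_{i}$ is the device that makes this routine. I do not expect a deeper obstacle, since all the module-level bookkeeping is already packaged in Lemma \ref{descentlemma} (equivalently Proposition \ref{GodOfTheSun}); the one genuinely indispensable ingredient is the $S$-equivariance of $D$, which is precisely what lets the single pointwise operator $\widetilde{D}$ serve simultaneously as a connection for $j_{\Gamma}([(X,D)])$ on the left, via $\alpha$, and on the right, via $\beta$.
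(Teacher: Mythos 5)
Your proposal is essentially the paper's own proof: both products are represented by the single module $(T_{g}^{X\rtimes_{r}\Gamma},\widetilde{D})$ with $D$ acting pointwise, the identifications are exactly the maps $\alpha$ and $\beta$ of Lemma \ref{descentlemma}, the isometries $v_{i}$ do the analytic bookkeeping, and the only nontrivial verification is Kucerovsky's connection condition for the left-hand product (the right-hand product being handled by $\widetilde{D}\beta=\beta(D\otimes 1)$, and positivity being vacuous since the coset module carries the zero operator). The one caveat is that you assume $D$ commutes exactly with the $S$-action ($D(g_{i}v)=g_{i}D(v)$), whereas the paper only uses invariance up to bounded perturbation: its connection operator therefore carries an extra bounded term involving $\Psi(g_{i}\gamma)\bigl(D-g_{i}\gamma D\gamma^{-1}g_{i}^{-1}\bigr)$, which your commutator computation (and your identity $v_{i}^{*}\widetilde{D}v_{i}=D\rtimes_{r}\Gamma$) silently drops; including this bounded correction restores full generality without changing the structure of your argument.
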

\begin{proof} By Lemma \ref{descentlemma} we have bimodule isomorphisms
\[(X\rtimes_{r}\Gamma)\otimes_{B\rtimes_{r}\Gamma}T^{B\rtimes_{r}\Gamma}_{g}\xrightarrow{\beta} T^{X\rtimes_{r}\Gamma}_{g}\xleftarrow{\alpha} T^{A\rtimes_{r}\Gamma}_{g}\otimes_{A\rtimes_{r}\Gamma} (X\rtimes_{r}\Gamma).\]
Define an operator $\widehat{D}$ on the dense submodule
$$C_{c}(\Gamma g^{-1}\Gamma, \textnormal{Dom } D)\subset T^{X\rtimes_{r}\Gamma}_{g},$$ via
\[(\widehat{D}\Upsilon )(\xi):=D(\Upsilon(\xi)).\]
Then $\widehat{D}\beta=\beta(D\otimes 1)$ and hence $\widehat{D}$ is essentially self-adjoint and regular, and has locally compact resolvent. We wish to show that $\hat{D}$ represents the Kasparov product of $T^{A\rtimes_{r}\Gamma}_{g}$ and $(X,D)$, under the isomorphism $\alpha$. To this end we need to verify conditions 1-3 of \cite[Theorem 13]{Kuc}. Because the module $T^{A\rtimes_{r}\Gamma}_{g}$ carries the zero operator, only the connection condition 1 needs argument.

Let $\mathcal{A}$ denote the dense subalgebra of $A$ for such that $[D,a]$ is bounded for $a\in\mathcal{A}$. Then, for $\Psi\in C_{c}(\Gamma g^{-1} \Gamma,\mathcal{A})$, $\xi\in\Gamma$ and a fixed element $g_{i}$ we have
\begin{align*}\widehat{D}\alpha  (\Psi \otimes \Phi)(g_{i}\xi)-\alpha(&\Psi\otimes D\Phi)(g_{i}\xi)\\&=\sum_{\gamma\in\Gamma}D\Psi(g_{i}\gamma)\cdot g_{i}\gamma\Phi(\gamma^{-1}\xi)-\Psi(g_{i}\gamma)g_{i}\gamma D\Phi(\gamma^{-1}\xi)
\\
&=\sum_{\gamma\in\Gamma}([D,\Psi(g_{i}\gamma)]-\Psi(g_{i}\gamma)(D-g_{i}\gamma D \gamma^{-1}g_{i}^{-1}) )g_{i}\gamma\Phi(\gamma^{-1}\xi)\\
&=g_{i}\left(\sum_{\gamma\in\Gamma}g_{i}^{-1}([D,\Psi(g_{i}\gamma)]-\Psi(g_{i}\gamma)(D-g_{i}\gamma D \gamma^{-1}g_{i}^{-1}) \gamma\Phi(\gamma^{-1}\xi)\right)\\
&=v_{i}(C^{i}_{\Psi}* \Phi )(g_{i}\xi).
\end{align*}
Here $C^{i}_{\Psi}$ denotes the map
\begin{align*}C^{i}_{\Psi}:\Gamma &\to \End^{*}_{B}(X) \\
\gamma &\mapsto g_{i}^{-1}([D,\Psi(g_{i}\gamma)]-\Psi(g_{i}\gamma)(D-g_{i}\gamma D \gamma^{-1}g_{i}^{-1})),
\end{align*}
which is of finite support since $\Psi$ is. Such maps define adjointable operators on $C_{c}(\Gamma,X)$ via the convolution action. Writing $|\Psi\rangle:\Phi\to \Psi\otimes \Phi$ we have
\[\widehat{D}\alpha|\Psi\rangle-\alpha|\Psi\rangle D=\sum_{i=1}^{d}  v^{i}\circ C^{i}_{\Psi}:X\rtimes_{r}\Gamma\to T^{X\rtimes_{r}\Gamma}_{g}.\]
which defines a bounded adjointable operator. Thus $\widehat{D}$ satisfies Kucerovsky's connection condition as desired. 
\end{proof}

\begin{corollary} For any $\alpha\in KK^{S}_{j}(A,B)$ and any separable $C^{*}$-algebra $C$, the induced maps
\[\alpha_{*}:KK_{i}(C, A\rtimes_{r} \Gamma)\to KK_{i+j}(C, B\rtimes_{r} \Gamma),\quad \alpha^{*}:KK_{i}(B\rtimes_{r} \Gamma,C)\to KK_{i+j}(A\rtimes_{r} \Gamma,C), \]
are Hecke equivariant. In fact we can replace $KK(C, -)$ and $KK(-,C)$ by any co- resp. contravariant functor which is homotopy invariant, split exact and stable.
\end{corollary}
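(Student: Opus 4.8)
The plan is to deduce the statement formally from Theorem \ref{KKequiv}. Recall from Section \ref{redcross} that the Hecke operator $T_{g}$ on $KK_{i}(C,A\rtimes_{r}\Gamma)$ is right Kasparov multiplication by $[T_{g}^{A\rtimes_{r}\Gamma}]$, that $T_{g}$ on $KK_{i}(B\rtimes_{r}\Gamma,C)$ is left Kasparov multiplication by $[T_{g}^{B\rtimes_{r}\Gamma}]$, and that $\alpha_{*}$ and $\alpha^{*}$ are, respectively, right and left Kasparov multiplication by $j_{\Gamma}(\alpha)$. By associativity of the Kasparov product, the two asserted commutations $T_{g}\circ\alpha_{*}=\alpha_{*}\circ T_{g}$ and $T_{g}\circ\alpha^{*}=\alpha^{*}\circ T_{g}$ both reduce to the single equality
\[[T_{g}^{A\rtimes_{r}\Gamma}]\otimes j_{\Gamma}(\alpha)=j_{\Gamma}(\alpha)\otimes[T_{g}^{B\rtimes_{r}\Gamma}]\in KK_{j}(A\rtimes_{r}\Gamma,B\rtimes_{r}\Gamma),\]
which is precisely the identity stated at the beginning of this section.

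To prove this equality I would first represent the given class concretely. By the equivariant Baaj--Julg theorem, $\alpha$ is the class of some $S$-equivariant unbounded Kasparov module $(X,\pi,D)$ of parity $j$, and replacing $X$ by the essential submodule $\overline{\pi(A)X}$ yields a left-essential unbounded Kasparov module representing the same class. Theorem \ref{KKequiv} then applies and gives exactly the displayed identity, once one matches up which crossed product each occurrence of $T_{g}$ refers to.

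For the concluding sentence I would invoke the universal property of $KK$-theory \cite{higson}: a homotopy invariant, split exact and stable functor $F$ on separable $C^{*}$-algebras factors through the $KK$-category, so each class $\beta\in KK_{0}(P,Q)$ induces a homomorphism $F(\beta)$ from $F(P)$ to $F(Q)$ (reversing the arrow in the contravariant case), compatibly with Kasparov products. Since, by construction, the Hecke operator $T_{g}$ on $F(A\rtimes_{r}\Gamma)$ is $F([T_{g}^{A\rtimes_{r}\Gamma}])$ and the maps $\alpha_{*},\alpha^{*}$ are $F(j_{\Gamma}(\alpha))$, applying $F$ to the displayed identity yields the desired commutation; the odd case $j=1$ is treated analogously after passing to suspensions.

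The only point that is not purely formal is the passage from an abstract equivariant $KK$-class to a left-essential unbounded representative, so that Theorem \ref{KKequiv} becomes applicable; this relies on the equivariant Baaj--Julg construction together with the routine fact that restricting to $\overline{\pi(A)X}$ does not change the $KK$-class. Everything else follows from associativity of the Kasparov product and the universal property of $KK$.
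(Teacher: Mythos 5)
Your argument is correct and is exactly the route the paper intends: the corollary is an immediate consequence of Theorem \ref{KKequiv} via associativity of the Kasparov product (for the $KK$ statements) and the universal property of $KK$-theory (for the general functor statement), the paper itself offering no further proof. You also sensibly fill in the one point the paper leaves implicit, namely that an arbitrary class in $KK^{S}_{j}(A,B)$ admits a left-essential unbounded representative so that Theorem \ref{KKequiv} applies.
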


\subsection{Extensions and Hecke equivariant exact sequences} The paper \cite{Thomsen} establishes, for any locally compact group $G$, an isomorphism
\[KK^{G}_{1}(A,B)\xrightarrow{\sim} \textnormal{Ext}^{G}(A\otimes\mathbb{K}_{G}, B\otimes\mathbb{K}_{G}),\]
where $\mathbb{K}_{G}\simeq \mathbb{K}(L^{2}(G\times\mathbf{N}))$. A $G$-equivariant semi-split extension
\begin{equation*} 0\to B\to E\to A\to 0,\end{equation*}
induces a $G$-equivariant semi-split extension
\[0\to B\otimes\mathbb{K}_{G}\to E\otimes\mathbb{K}_{G}\to A\otimes\mathbb{K}_{G}\to 0,\]
and thus an element in $KK^{G}_{1}(A,B)$. 

\begin{theorem} Let $G$ be a locally compact group, $\Gamma\subset G$ a discrete subgroup, $C_{G}(\Gamma)\subset G$ its commensurator and $S$ a group with $\Gamma\subset S\subset C_{G}(\Gamma)$. For any $\Gamma$-exact and $S$-equivariant semi-split extension
\[0\to B\to E\to A\to 0,\]
of seperable $S$-algebras and any separable $C^{*}$-algebra $C$, the exact sequences 
\begin{equation}\label{cov}\cdots \to KK_{i}(C,B\rtimes_{r}\Gamma)\to KK_{i}(C,E\rtimes_{r}\Gamma)\to KK_{i}(C,A\rtimes_{r}\Gamma)\to\cdots\end{equation}
\begin{equation}\label{contrav}\cdots \to KK_{i}(A\rtimes_{r}\Gamma,C)\to KK_{i}(E\rtimes_{r}\Gamma,C)\to KK_{i}(B\rtimes_{r}\Gamma,C)\to\cdots,\end{equation}
are $\mathbf{Z}[\Gamma, S]$-equivariant.
\end{theorem}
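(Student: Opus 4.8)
The plan is to deduce the theorem as a formal consequence of the descent/equivariance result of Theorem \ref{KKequiv} together with the standard machinery relating equivariant extensions to $KK$-theory. First I would recall that, by the cited result of \cite{Thomsen}, the given $S$-equivariant semi-split extension $0\to B\to E\to A\to 0$ determines a class $[\partial]\in KK^{S}_{1}(A,B)$, and that the two long exact sequences \eqref{cov} and \eqref{contrav} are, after applying the Kasparov descent map $j_{\Gamma}$ and using $\Gamma$-exactness, precisely the six-term exact sequences associated to the extension $0\to B\rtimes_{r}\Gamma\to E\rtimes_{r}\Gamma\to A\rtimes_{r}\Gamma\to 0$; the connecting maps in these sequences are given by Kasparov product with $j_{\Gamma}([\partial])\in KK_{1}(A\rtimes_{r}\Gamma,B\rtimes_{r}\Gamma)$. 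The inclusion and quotient maps $B\to E\to A$ are themselves $S$-equivariant $*$-homomorphisms, hence define classes in $KK^{S}_{0}$, and after descent the induced maps on $KK(C,-)$ and $KK(-,C)$ are Kasparov products with these descended classes.

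The key step is then to invoke Theorem \ref{KKequiv}: for \emph{any} $S$-equivariant (left-essential unbounded, or equivalently bounded) Kasparov module representing a class $\alpha\in KK^{S}_{*}(A,B)$, one has $j_{\Gamma}(\alpha)\otimes[T_{g}]=[T_{g}]\otimes j_{\Gamma}(\alpha)$ in $KK_{*}(A\rtimes_{r}\Gamma,B\rtimes_{r}\Gamma)$. Applying this to the class $[\partial]$ of the extension, and to the classes of the structure maps $B\to E$, $E\to A$, shows that every map appearing in \eqref{cov} and \eqref{contrav} is given by Kasparov product with a class that commutes with $[T_{g}]$. Associativity of the Kasparov product then gives, for each arrow $\varphi$ in the sequence and each $x$ in its domain, $\varphi(T_{g}x)=T_{g}(\varphi(x))$, which is exactly Hecke equivariance. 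Since the generators $[\Gamma g^{-1}\Gamma]$ of $\mathbf{Z}[\Gamma,S]$ act via the $[T_{g}]$ (Theorem \ref{Heckemod}), the sequences are $\mathbf{Z}[\Gamma,S]$-equivariant, and the same argument applies verbatim with $KK(C,-)$, $KK(-,C)$ replaced by any homotopy-invariant, split-exact, stable functor via the universal property.

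The main obstacle I anticipate is purely bookkeeping rather than conceptual: one must check that the connecting homomorphism of the descended extension is \emph{literally} the Kasparov product with $j_{\Gamma}([\partial])$ — i.e. that descent intertwines the ``extension class'' construction of \cite{Thomsen} with the ordinary one for $B\rtimes_{r}\Gamma\to E\rtimes_{r}\Gamma\to A\rtimes_{r}\Gamma$ — so that the $S$-equivariant class $[\partial]$ really is the object to which Theorem \ref{KKequiv} must be applied. This compatibility of $j_{\Gamma}$ with the boundary map is standard (it follows from $j_{\Gamma}$ being a functor compatible with the Kasparov product and mapping the Busby-type class to the Busby-type class of the descended extension), but it is the one point that needs to be spelled out. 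A secondary, minor point is that Theorem \ref{KKequiv} is stated for \emph{unbounded} left-essential Kasparov modules, so one should note that every class in $KK^{S}_{*}(A,B)$ — in particular $[\partial]$ and the structure maps — admits such a representative (after stabilising and possibly replacing the extension by an isomorphic one, using $\mathbb{K}_{S}$-stability and the semi-split hypothesis), or alternatively invoke the bounded version of the connection argument; either way no new idea is required.
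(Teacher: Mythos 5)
Your proposal is correct and follows essentially the same route as the paper: identify the arrows of both sequences (the structure maps and the connecting map, the latter via the class $[\textnormal{Ext}]\in KK_{1}^{S}(A,B)$ coming from the semi-split extension) as images under the descent map $j_{\Gamma}$, and then invoke Theorem \ref{KKequiv} to commute each such class with $[T_{g}]$. The two caveats you flag — compatibility of descent with the boundary map, and the unbounded left-essential representative hypothesis in Theorem \ref{KKequiv} — are real but routine points that the paper's proof likewise passes over without comment.
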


\begin{proof} Exactness of $\Gamma$ implies that we obtain a semi-split extension
\begin{equation}\label{redext}0\to B\rtimes_{r} \Gamma \to E\rtimes_{r}\Gamma \to A\rtimes_{r}\Gamma \to 0,\end{equation}
yielding the exact sequences \eqref{cov} and \eqref{contrav}. By Theorem \ref{Heckemod} all groups in these exact sequences are Hecke modules. In sequence \eqref{cov}, the maps
\[KK_{i}(C,B\rtimes_{r}\Gamma)\to KK_{i}(C,E\rtimes_{r}\Gamma),\quad KK_{i}(C,E\rtimes_{r}\Gamma)\to KK_{i}(C,A\rtimes_{r}\Gamma),\]
are induced by elements in $KK_{0}(B\rtimes_{r}\Gamma,E\rtimes_{r}\Gamma)$ and $KK_{0}(A\rtimes_{r}\Gamma,E\rtimes_{r}\Gamma)$, respectively. These elements are in the image of the descent maps
\[KK_{0}^{S}(B,E)\to KK_{0}^{\Gamma}(B,E)\to KK_{0}(B\rtimes_{r}\Gamma,E\rtimes_{r}\Gamma),\]
\[KK_{0}^{S}(E,A)\to KK_{0}^{\Gamma}(E,A)\to KK_{0}(E\rtimes_{r}\Gamma,A\rtimes_{r}\Gamma),\]
and thus are Hecke equivariant by Theorem \ref{KKequiv}. Since the extension \eqref{redext} is semi-split it defines a class $[\textnormal{Ext}]\in KK_{1}^{S}(A,B)$. The boundary maps in the exact sequence \eqref{cov} are implemented by an element $\partial\in KK_{1}(A\rtimes_{r}\Gamma, B\rtimes_{r}\Gamma)$, and this element is the image of $[\textnormal{Ext}]$ under the composition
\[KK_{1}^{S}(A,B)\to KK_{1}^{\Gamma}(A,B)\to KK_{1}(A\rtimes_{r}\Gamma, B\rtimes_{r}\Gamma).\]
Thus by Theorem \ref{KKequiv} the boundary maps in the sequence \eqref{cov} are Hecke equivariant. The argument for sequence \eqref{contrav} is similar.
\end{proof}

Interesting examples of $S$-equivariant extensions come from partial compactifications of $G$-spaces. Let $X$ be a locally compact space with a $G$-action. A partial $S$-compactification is a $S$-space $\overline{X}$ which contains $X$ as an open dense subset. We write $\partial X:=\overline{X}\setminus X$ and we obtain the $S$-equivariant exact sequence
\[0\to C_{0}(X)\to C_{0}(\overline{X}) \to C_{0}(\partial X)\to 0.\]

\begin{example} Let $G=\textnormal{Isom}(\mathbf{H})$ where $\mathbf{H}$ is the real hyperbolic $n$-space. The geodesic compactification $\overline{\mathbf{H}}$ of $\mathbf{H}$ is a $G$-compactification and thus, it is an $S$-compactification for any lattice $\Gamma \subset G$ and subgroup $\Gamma\subset S\subset C_{G}(\Gamma)$. The associated Hecke equivariant exact sequence in $K$-homology has been studied extensively in \cite{mesland-sengun}. For torsion free $\Gamma$ and $M:=X/\Gamma$, there is a Morita equivalence $C_{0}(M)\sim C_{0}(X)\rtimes_{r}\Gamma$, and a $KK$-equivalence $C(\overline{\mathbf{H}})\rtimes_{r}\Gamma\sim C_{r}^{*}(\Gamma)$. The exact sequence takes the form
\[\cdots\to K_{*}(C_{0}(M))\to K_{*}(C^{*}_{r}(\Gamma) )\to K_{*}(C(\partial\mathbf{H})\rtimes_{r}\Gamma)\to\cdots ,\]
as in \cite{EM, emerson-nica}.
\end{example}

\begin{example}
Let $G$ be the group of real points of a reductive algebraic group ${\bf G}$ over $\Q$ and let $X$ be its associated global symmetric space.  The Borel-Serre partial compactification $\overline{X}$ of $X$ is a ${\bf G}(\Q)$-compactification but not a $G$-compactification (see \cite{borel-serre}). However if $\Gamma \subset {\bf G}(\Q)$ is an arithmetic subgroup, then $C_G(\Gamma) = {\bf G}(\Q)$. So $\overline{X}$ is a $C_G(\Gamma)$-compactification. The action of $\Gamma$ on $\overline{X}$ is cocompact and continues to be proper. Writing $M:=X/\Gamma$ for torsion free $\Gamma$ , we obtain the Borel-Serre compactification $\overline{M}:=\overline{X}/\Gamma$ of $M$ and its boundary $\partial \overline{M}:=\partial X/\Gamma$. There are Morita equivalences 
$$C_{0}(X)\rtimes_{r}\Gamma\sim C_{0}(M),\quad C_{0}(\overline{X})\rtimes_{r}\Gamma\sim C_{0}(\overline{M}),\quad C_0(\partial X)\rtimes_{r}\Gamma\sim C_{0}(\partial \overline{M}).$$
The exact sequence thus reduces to the topological $K$-theory sequence
\[\cdots\to K^{*}(M)\to K^{*}(\overline{M})\to K^{*}(\partial \overline{M})\to\cdots\]
\end{example}
\noindent of the pair $(\overline{M},\partial \overline{M})$.


\end{document}